\titleformat{\section}{\normalfont\scshape\centering}{\thesection}{1em}{#1}
\newtheorem{theorem}{Theorem}%[section]
\theoremstyle{definition}
\newtheorem{definition}[theorem]{Definition}
\theoremstyle{remark}
\newtheorem{example}[theorem]{Example}
\theoremstyle{remark}
\newtheorem{remark}[theorem]{Remark}
\theoremstyle{theorem}
\newtheorem{lemma}[theorem]{Lemma}
\theoremstyle{theorem}
\theoremstyle{theorem}
\newtheorem{proposition}[theorem]{Proposition}
\theoremstyle{theorem}
\newcommand{\scalar}[2]{\left\langle\,#1\left|\,#2\right.\right\rangle}
\newcommand{\class}{\mathcal{C}}
\newcommand{\id}{\text{Id}\,}
\newcommand{\ZZ}{\mathbb{Z}}
\newcommand{\RR}{\mathbb{R}}
\renewcommand{\SS}{\mathbb{S}}
\renewcommand{\mod}{\,\text{mod}\,}
\title[Only quadrics have pseudo-caustics]{\textit{Only quadrics have pseudo-caustics}\\ --\\ on caustics of Riemannian, pseudo-Euclidean and projective billiards in higher dimensions}
\author{Corentin Fierobe}
\address{Institute of Science and Technology Austria, Am Campus 1, 3400 Klosterneuburg, Austria}
\email{corentin.fierobekoz@gmail.com}
\begin{document}
\maketitle

\begin{abstract}
This papers studies properties of caustics of different billiard models in dimension $d$ at least $3$, namely of billiards in $\RR^d$ whose law of reflection is defined by 1) a Riemannian metric projectively equivalent to the Euclidean one; 2) a constant non-degenerate quadratic form (\textit{pseudo-Euclidean billiards}); 3) a smooth field of transverse lines to the boundary defining a law of reflection (\textit{projective billiards}). Case 1) and 2) are particular cases of 3). The paper gives a necessary and sufficient condition so that if such billiards have a caustic then the latter is a quadric. In the case of pseudo-Euclidean billiards, we even show that the only billiards having a caustic are the quadrics, for which the caustics are pseudo-confocal quadrics.
\end{abstract}

\setcounter{tocdepth}{1}
\tableofcontents

%Renommer
\renewcommand{\mod}{\,\text{mod}\,}

\section{Introduction}

The study of the light's behaviour in a homogeneous environment bounded with mirrors is of great interest and has fundamental applications, for example in optical engineering or in fundamental physics, \textit{e.g.} to control laser beams of particles, or to study the existence of regions with no light, but also the opposite phenomenon, the so-called \textit{caustics}, which are spots along curves with an intense concentration of light. 

These systems can be modelized by mathematical billiards, as domains in which one can imagine a ray of light evolving by either going straight forward when no obstacles are on its way, or bouncing on such an obstacle according to a certain law of reflection, for example the classical law of optics \textit{angle of incidence = angle of reflection}.

This law of reflection takes various expressions, depending on which physical phenomenon one decides to study. One can for example consider that the mirrors are not perfect anymore, or that the environment is inhomogeneous which would give a \textit{twisted} version of the classical law of reflection. One can also study, instead of the trajectory of a ray of light, the behaviour of an electron submitted to a magnetic field.

The object of interest of this paper are billiard models with a generalized law of reflection, the so-called \textit{projective billiards}. They unify various laws, including the classical one in a Euclidean or Riemannian metric whose geodesics are lines. The former models were introduced and studied by Sergey Tabachnikov \cite{taba_projectif_ball, taba_projectif}. They roughly consist of a domain endowed with a smooth field of lines tranverse to the boundary: the latter defines at each point on the boundary a law of reflection passing through this point. We investigate particular fields of transverse line $L$, for which a domain is what is called \textit{$L$-symmetric} (defined below, see Definition \ref{definition:L_symmetric}).

The paper studies these projective billiard models in dimension $d\geq 3$ and focuses on the existence and properties of caustics, \textit{i.e. hypersurfaces to which any light trajectory remains tangent after successive reflections}. It questions for which billiards a caustic has to be a quadric, and relates the answer to the class of $L$-symmetric billiards which we mentionned above. More precisely we prove the following

\begin{theorem}
\label{theorem:main1}
Let $\Omega$ be a strictly convex domain in $\RR^d$ with $d\geq 3$, whose boundary is endowed with a smooth field of transve lines $L$. Assume that the corresponding projective billiard has a caustic $\Gamma$. Then $\Gamma$ is a quadric if and only if $\partial\Omega$ is $L$-symmetric.
\end{theorem}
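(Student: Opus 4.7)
My plan is to reduce both implications to a pointwise constraint at each boundary point. At a point $p\in\partial\Omega$, the projective billiard law of reflection is an involution $\sigma_p$ of the space of lines through $p$, fixing every line contained in the tangent hyperplane $T_p\partial\Omega$ and having $L(p)$ as its other fixed direction. Because $\Gamma$ is a caustic, any line through $p$ tangent to $\Gamma$ reflects to another line through $p$ tangent to $\Gamma$, so the tangent cone $C_p(\Gamma)$ from $p$ to $\Gamma$ must be $\sigma_p$-invariant for every $p\in\partial\Omega$. This invariance is the technical backbone of both directions.

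For the implication ``$\Gamma$ is a quadric $\Rightarrow$ $\partial\Omega$ is $L$-symmetric'', the cone $C_p(\Gamma)$ is a quadratic cone. A projective involution that fixes a hyperplane pointwise and preserves a nondegenerate quadratic cone must have as its second fixed element the pole of that hyperplane with respect to the cone. Verifying this at every $p$ and translating the pole--polar relation into the intrinsic data of $\partial\Omega$ near $p$ should yield precisely the pointwise condition of Definition~\ref{definition:L_symmetric}.

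For the converse I would proceed in two stages. First, I would use the $L$-symmetric hypothesis, together with the $\sigma_p$-invariance of $C_p(\Gamma)$ and the smoothness of $\Gamma$, to show that $C_p(\Gamma)$ is itself a quadric cone at every $p\in\partial\Omega$; that is, $\Gamma$ appears quadric to second order from each boundary point. Second, I would integrate these osculating-quadric data into a single global quadric $Q$ and identify $Q=\Gamma$. To do so I expect to fix three boundary points $p_1,p_2,p_3$ in general position, fit a unique quadric $Q$ matching the tangent cones $C_{p_i}(\Gamma)$, and then propagate the identity along one-parameter families of billiard orbits, exploiting that the reflection law combined with $L$-symmetry maps $Q$-tangent lines to $Q$-tangent lines.

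The main obstacle is exactly this second stage: passing from ``at each boundary point, $\Gamma$ looks locally like a quadric'' to ``$\Gamma$ is globally a quadric''. This is the rigidity statement that requires $d\ge 3$, in the spirit of the Berger--Gruber--Klee theorem for Euclidean billiards: in dimension two, the caustic is a curve and such rigidity fails, whereas for $d\ge 3$ the tangency correspondence between the two hypersurfaces $\partial\Omega$ and $\Gamma$ is overdetermined. The delicate technical points will be ruling out monodromy when propagating $Q$ along closed orbit families, ensuring that the locally fitted quadrics agree on overlaps, and managing the degenerate loci where tangent cones become reducible; this is where I expect the bulk of the argument to live.
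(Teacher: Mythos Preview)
Your outline has the right large-scale architecture (relate $L$-symmetry to the tangent cones $C_q(\Gamma)$ being quadratic, then pass from quadratic cones to a global quadric), but the mechanism you propose for the first link is not the one that actually works, and this is a genuine gap.

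You base everything on the fact that the projective reflection $\sigma_q$ preserves the cone $C_q(\Gamma)$. That invariance is true, but it is far too weak in both directions. In the direction ``$L$-symmetric $\Rightarrow$ cones are quadratic'': any cone through $q$ that is invariant under the central symmetry in $T_q\partial\Omega$ about the point $L(q)\cap T_q\partial\Omega$ is $\sigma_q$-invariant, and there are plenty of non-quadratic such cones. Meanwhile the $L$-symmetry condition $\scalar{dn_q(u)}{d\nu_q(v)}=\scalar{dn_q(v)}{d\nu_q(u)}$ involves the \emph{derivatives} $dn_q$ and $d\nu_q$, i.e.\ how the boundary and the line field vary near $q$; it says nothing about the shape of a single fixed cone at $q$. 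So you cannot combine $\sigma_q$-invariance with $L$-symmetry at $q$ to force $C_q(\Gamma)$ to be quadratic. In the other direction, your pole--polar observation is correct but yields only a pointwise algebraic relation between $L(q)$, $T_q\partial\Omega$ and the quadric $\Gamma$; you still have to differentiate along $\partial\Omega$ and show this produces exactly the $L$-symmetry identity, which you have not done and which is not obvious.

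What the paper actually uses (following Berger) is a stronger consequence of the caustic property than mere $\sigma_q$-invariance: for each tangent line $\ell\subset C_q(\Gamma)$, the tangent hyperplane to $\Gamma$ at the tangency point cuts $T_q\partial\Omega$ in a codimension-one subspace $H$, and the caustic condition, applied as the reflection point $q$ \emph{moves along $\partial\Omega$}, forces $H$ to satisfy an eigenvector equation built from $d\nu_q$ and the second fundamental form $II_q$ (this is the ``admissible hyperplane'' Lemma~\ref{lemma:equation_admissible}). In dimension $3$ this turns the section of $C_q(\Gamma)$ by a plane parallel to $T_q\partial\Omega$ into a solution of an explicit first-order ODE whose coefficients are exactly $a,b,c,d$ coming from $d\nu_q$ and $II_q$; the computation in Lemma~\ref{lemma:conical_solutions} then shows that this ODE has conic solutions \emph{if and only if} $b_1=c_1$, which is precisely $L$-symmetry at $q$. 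This is the missing idea in your plan: the caustic condition has to be differentiated along $\partial\Omega$ before $d\nu_q$ and $II_q$ ever enter.

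Finally, for the step ``all tangent cones quadratic $\Rightarrow$ $\Gamma$ is a quadric'', your propagation-along-orbits scheme with monodromy control is much heavier than necessary. The paper reduces to $d=3$ by slicing and then invokes Berger's duality argument: a surface all of whose tangent cones from an open family of exterior points are quadratic is itself a piece of a quadric. No orbit-following or quadric-fitting at three points is needed.
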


Interestingly, projective billiards contain the class of ususal Euclidean billiards, but more generally billiards in $\RR^d$ endowed with a metric whose geodesics are lines, and also pseudo-Euclidean or Finsler billiards (to see this, one can associate to a domain its field of orthogonal lines for the corresponding notion of "\textit{metric}"). Pseudo-Euclidean billiards are billiard for which the law of reflection is defined by a constant non-degenerate quadratic form. For Finsler billiards, the law is defined by a Finsler metric which does not depend on the fiber. Hence Theorem \ref{theorem:main1} applies to all of these billiards:

\begin{theorem}
\label{theorem:main2}
Theorem \ref{theorem:main1} applies to ususal Euclidean billiards, to billiards in $\RR^d$ endowed with a metric whose geodesics are lines, to pseudo-Euclidean billiards and to Finsler billiards.
\end{theorem}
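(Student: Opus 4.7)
The plan is to realize each of the four billiard models as a projective billiard, by exhibiting for each of them a canonical smooth field of transverse lines $L$ along $\partial\Omega$ whose induced projective reflection law coincides with the native law of reflection of that model. Once this pointwise identification is established, the notion of caustic is the same in both settings, and Theorem \ref{theorem:main1} applies directly, yielding the stated conclusion.

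The verification splits into four essentially parallel steps. First, for the usual Euclidean billiard I would let $L(x)$ be the Euclidean normal line to $\partial\Omega$ at $x$; the classical law ``angle of incidence equals angle of reflection'' is, by definition, the involution of $T_x\RR^d$ which fixes $T_x\partial\Omega$ and reverses $L(x)$, and this is exactly the projective reflection law induced by $L$. Second, for a Riemannian metric $g$ on $\RR^d$ whose geodesics are straight lines, I would take $L(x)$ to be the $g$-orthogonal to $T_x\partial\Omega$; because geodesics coincide with Euclidean segments, the Riemannian reflection is again a linear involution fixing the tangent hyperplane and sending $L(x)$ to its opposite, matching the projective law.

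Third, for a pseudo-Euclidean billiard associated to a non-degenerate quadratic form $Q$, I would set $L(x)$ to be the $Q$-orthogonal line to $T_x\partial\Omega$ and observe that, by construction, the pseudo-Euclidean reflection sends an incoming direction to its image under the involution fixing $T_x\partial\Omega$ and reversing $L(x)$, hence agrees with the projective law. Fourth, for a Finsler billiard whose metric is fiber-independent (so given by a Minkowski norm on $\RR^d$), I would use the Legendre transform of the Finsler norm applied to the co-normal of $\partial\Omega$ to produce $L(x)$, and check that the Finsler reflection rule, built via this Legendre duality, is precisely the projective one for this $L$. In each case smoothness of $L$ follows from smoothness of $\partial\Omega$ and of the defining geometric datum.

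The main obstacle is the transversality of $L$ to $\partial\Omega$, which is needed in order to invoke Theorem \ref{theorem:main1}. In the Euclidean and Riemannian cases it is automatic from non-degeneracy of the inner product. In the pseudo-Euclidean case, $L(x)$ is transverse to $\partial\Omega$ exactly where the restriction $\bigrestreint{Q}{T_x\partial\Omega}$ is non-degenerate, so the identification is valid on the open locus where $\partial\Omega$ avoids isotropic tangent hyperplanes; an analogous mild non-degeneracy hypothesis, coming from strict convexity of the Finsler unit ball, suffices in the Finsler setting. Once these conditions are in place, the reduction to Theorem \ref{theorem:main1} is immediate and gives Theorem \ref{theorem:main2}.
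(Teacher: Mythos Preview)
Your proposal is correct and follows essentially the same approach as the paper: both realize each of the four billiard models as a projective billiard by taking the field of ``normal'' lines associated to the respective geometric structure, so that Theorem~\ref{theorem:main1} applies verbatim. The paper treats this reduction rather informally (see the remark after Definition~\ref{definition:projective_law} and the paragraph ``Comments on the previous results'' in Section~\ref{section:statements}), whereas you spell out the four cases and the transversality issue more explicitly; in particular your use of the Legendre transform for the Finsler case and your identification of the transversality condition in the pseudo-Euclidean case with non-degeneracy of $\bigrestreint{Q}{T_x\partial\Omega}$ (the ``space-time'' hypothesis in the paper) match the paper's intent precisely.
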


For pseudo-Euclidean billiards, Theorem \ref{theorem:main1} can be made more precise, since 1) it can be shown that any such billiard is $L$-symmetric and 2) it is known that for two quadrics $Q_1, Q_2$ of some pencil of quadrics called pseudo-confocal, one is a caustic of the other one for the pseudo-Euclidean law of reflection. We will explain in more details these arguments (see Section \ref{section:theorem_only}), which allow to prove the 

\begin{theorem}
\label{theorem:main3}
Let $\Omega$ be a pseudo-Euclidean billiard in $\RR^d$ with $d\geq 3$ whose field of normal line to the boundary is transverse to it. If $\Omega$ has a caustic $\Gamma$, then both $\Gamma$ and $\Omega$ are pseudo-confocal quadrics.
\end{theorem}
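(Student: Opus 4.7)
The plan is to apply Theorem \ref{theorem:main1} with $L$ the field of pseudo-Euclidean normal lines, and then to upgrade the resulting statement ``$\Gamma$ is a quadric'' to the sharper ``$\Gamma$ and $\partial\Omega$ lie in the same pseudo-confocal pencil'' via a rigidity argument specific to pseudo-Euclidean reflection.

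First I would define, at each $x\in\partial\Omega$, the line $L(x)$ as the pseudo-normal line through $x$, that is, the line directed by the vector orthogonal to $T_x\partial\Omega$ for the non-degenerate quadratic form defining the pseudo-Euclidean structure. The transversality hypothesis says exactly that $L$ is transverse to $\partial\Omega$, and by construction the pseudo-Euclidean reflection law coincides with the projective reflection associated to $L$. I would then invoke the assertion (1) recalled in the excerpt, namely that pseudo-Euclidean billiards are automatically $L$-symmetric. The verification should be a direct consequence of the symmetry of the underlying quadratic form: in any $2$-plane containing $L(x)$, the pseudo-Euclidean reflection acts as an involution preserving both $L(x)$ and its orthogonal intersection with $T_x\partial\Omega$. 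With $\partial\Omega$ shown to be $L$-symmetric, Theorem \ref{theorem:main1} implies that $\Gamma$ is a quadric.

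To conclude that $\Gamma$ and $\partial\Omega$ lie in a common pseudo-confocal pencil I would combine this with the direct statement (2) recalled in the excerpt, according to which any two members of a pseudo-confocal pencil are mutual caustics. Fix a point $x_0\in\partial\Omega$: the pair $(x_0, T_{x_0}\partial\Omega)$ selects a unique quadric $Q_0$ in the pseudo-confocal pencil containing $\Gamma$ that passes through $x_0$ tangentially to $T_{x_0}\partial\Omega$; by (2), $Q_0$ has $\Gamma$ as a caustic. If $\partial\Omega$ agrees with $Q_0$ on a neighborhood of $x_0$, analyticity of $Q_0$ forces $\partial\Omega$ to be an open piece of $Q_0$ globally, completing the proof. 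To justify this local agreement I would propagate a pseudo-Joachimsthal-type invariant along billiard trajectories: at every reflection point $x\in\partial\Omega$, the caustic condition combined with the pseudo-Euclidean reflection law determines $T_x\partial\Omega$ uniquely from $\Gamma$ and the incoming direction, so $\partial\Omega$ is pointwise pinned to the pencil.

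The main obstacle is this rigidity step. Theorem \ref{theorem:main1} gives only the structural statement ``$\Gamma$ is a quadric'' and does not itself control $\partial\Omega$; extracting a quadratic equation for $\partial\Omega$ requires assembling the caustic condition across many reflections. The dimension hypothesis $d\geq 3$ is decisive here, as in Berger's classical Euclidean theorem: in higher dimensions the family of tangent lines from $x_0$ to $\Gamma$ is rich enough to over-determine the local geometry, excluding non-quadric deformations and locking $\partial\Omega$ into the pseudo-confocal pencil.
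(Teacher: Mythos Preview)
Your first half is exactly the paper's argument: define $L$ as the field of pseudo-normal lines, verify $L$-symmetry directly from the symmetry of the quadratic form (the paper does this as Lemma~\ref{lemma:pseudo_euclidean_symmetric}), and invoke Theorem~\ref{theorem:main1} to conclude that $\Gamma$ lies on a quadric $\mathcal Q$.

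For the rigidity step your proposal is close in spirit to the paper but is packaged less precisely, and the phrasing ``propagate a pseudo-Joachimsthal invariant along billiard trajectories'' introduces a genuine wrinkle. The paper does \emph{not} follow a single orbit: it instead defines, at every point $q$ in a neighbourhood of $S$, the cone $\mathcal C_q$ of lines through $q$ tangent to $\mathcal Q$, and observes that there is a \emph{unique} hyperplane $H(q)$ (with its $Q$-orthogonal line $L_0(q)$) whose pseudo-Euclidean reflection preserves $\mathcal C_q$. This gives a hyperplane distribution $q\mapsto H(q)$; by construction $T_qS=H(q)$ on $S$, so $S$ is an integral hypersurface of $H$. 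Fact (2) from Khesin--Tabachnikov then identifies the integral hypersurfaces of $H$ as precisely the quadrics pseudo-confocal to $\mathcal Q$, forcing $S$ into the pencil.

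Your version fixes one point $x_0$, picks the pseudo-confocal quadric $Q_0$ tangent there, and then tries to spread agreement along reflected rays. The difficulty is that a billiard orbit from $x_0$ meets $\partial\Omega$ only in a discrete set, so ``propagation along trajectories'' does not by itself reach a neighbourhood of $x_0$; and your appeal to analyticity of $Q_0$ is one-sided, since $\partial\Omega$ is only assumed $\class^2$. What actually pins down $T_x\partial\Omega$ at \emph{each} $x$ is the over-determination by the whole tangent cone $\mathcal C_x$ (this is exactly where $d\geq 3$ enters, as you note), and once you say that cleanly you have recovered the paper's distribution argument. So your outline is correct, but you should replace the orbit-propagation language by the pointwise uniqueness of $H(q)$ and then invoke fact (2) to identify the resulting foliation.
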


Theorems \ref{theorem:main1}, \ref{theorem:main2} and \ref{theorem:main3} are described more precisely in Section \ref{section:statements}.

The study of caustics, and more generally of invariant curves, in billiards is an intense subsject of research. A famous example is the billiard in the ellipse (resp. ellipsoid) for which it is known \cite{taba_book} that smaller confocal ellipses (resp. ellipsoids) are caustics. Applying a KAM scheme, Lazutkin \cite{Lazutkin} showed that if the boundary of a planar Euclidean billiard is striclty convex and sufficiently smooth, then there exists a large set of caustics accumulating to the boundary. Mather \cite{mather} proved that in a planar Euclidean billiard if the curvature of the boundary vanishes, then the latter has no caustics. Gutkin and Katok \cite{GutkinKatok} gave effective conditions in such billiards for which there exists caustics-free regions. These results were extended to the $2$-sphere and hyperbolic plane by Florentin, Ostrover and Rosen \cite{FlorentinOstroverRosen}. Let us mention the famous Birkhoff-Poritzky's conjecture:

\textbf{Conjecture (Birkhoff-Poritzky)}. \textit{If a convex domain contains an open set of invariant curves then it is an ellipse.}

The conjecture is still open, although some partial positive answers have been given, see for example \cite{AvilaSimoiKaloshin, Bialy, Bialy2013, BialyMironov, BialyMironovAnnals, glut_integrability, glut_integrability_bis, glut_ratint, glut_multib, KaloshinSorrentino, Koval}. 

For a billiard in dimension at least $3$, having a caustic is a more rigid property. Berger \cite{berger_caustics} proved that if such a Euclidean billiard has a caustic, then both are confocal quadrics. This result was later extended by Glutsyuk to billiards in spaces of constant curvature \cite{glut_spaceform}. Billiards in pseudo-Euclidean spaces and Minkowski metrics have been intensively studied by Dragović and Radnović \cite{AdaDragRad_minkowski, DragRad_bicent, DragRad, DragRad_minkowski}. More generally, projective billiards have been studied in any dimensions \cite{fierobe_these, fierobe_caustics, fierobe_triangular, fierobe1, glut_ratint, glut_multib, taba_projectif, taba_projectif_ball}. The results which can be found here extend previously known results \cite{fierobe_these} found by the author.

\textbf{Plan of the article.} More precise definitions of Riemannian billiards, pseudo-Euclidean billiards and projective billiards are given in \textbf{Section \ref{section:statements}}, as well as the statements of the results, namely Theorems \ref{theorem:main_riemannian}, \ref{theorem:main_pseudo_Euclidean} and \ref{theorem:main_projective}. In \textbf{Section \ref{section:admissible}}, the reader can find a precise study of the so-called \textit{admissible} hyperplanes, allowing to view \textit{à la Berger} any cone of lines tangent to a caustic as an integral subsmanifold of a certain distribution. In \textbf{Section \ref{section:theorem_symmetry}} we prove Theorem \ref{theorem:main_projective}, from which we deduce Theorem \ref{theorem:main_riemannian} and a part of Theorem \ref{theorem:main_pseudo_Euclidean}. We complete the proof of Theorem \ref{theorem:main_pseudo_Euclidean} in \textbf{Section \ref{section:theorem_only}}.

\section{Riemannian, pseudo-Euclidean and projective billiards}
\label{section:statements}

A \textit{classical billiard} on a manifold $M$ of dimension $d\geq 2$ given with a complete Riemannian metric $g$ is a domain $\Omega\subset M$ with piecewise smooth boundary. The reflection law at a smooth point $p$ of the boundary is defined as follows, where $\gamma$ is an oriented geodesic inside $\Omega$ hitting the boundary at $p$ with velocity $v\in T_pM$:

\begin{definition}[Riemannian billiard law of reflection]
\label{definition:riemannian_reflection}
We define the \textit{reflected geodesic} from $\gamma$ at $p$ as the oriented geodesic $\tilde \gamma$ passing through $p$ with velocity $\tilde v\in T_pM$ such that $\tilde v$ is obtained from $v$ by the linear orthogonal symmetry of $T_pM$ with respect to $T_p\Omega$ - that is the only linear non-trivial involution which preserves the hyperplane $T_p\partial\Omega$ and its $g$-orthogonal line.
\end{definition}

This defines a \textit{billiard flow} $\varphi^t$ for $t\in\RR$: given any point $p\in \Omega$ and a non-zero vector $v\in T_pM$, the billiard flow $\varphi^t(p,v)$ is obtain by following the geodesic with initial conditions $(p,v)$ until it eventually reflects on the boundary, and continue its trajectory, etc. It might eventually be not defined for all $t$.

\begin{example}[$d$-dimensional sphere]
\label{example:sphere}
Consider a domain $\Omega$ on the $d$-dimensional sphere $\SS^d\subset\RR^{d+1}$ equipped with the classical spherical metric $g$: the billiard flow inside $\Omega$ follow a trajectory along broken pieces of great circles - \textit{ie} intersection of hyperplanes of $\RR^{d+1}$ with $\SS^d$. Note that the canocical projection $\pi: \RR^{d+1}\setminus\{0\}\to\RR P^d$ restricts as diffeomorphism between a halph-sphere $\SS^d_+$ and a standard affine chart $\RR^d\subset\RR P^{d}$ which sends geodesics of $\SS^d$ to lines of $\RR^d$. However the pushforward $\pi_{\ast}g$ of $g$ to $\RR^d$ is not the Euclidean metric, and for any hyperplane $H\subset\RR^d$, its $\pi_{\ast}g$-orthogonal line differs from its Euclidean orthogonal line.
\end{example}

\begin{example}[Metrics on $\RR^d$ projectively equivalent to the Euclidean one]
\label{example:projectively_equivalent}
A metric $g$ on $\RR^d$ is said to be projectively equivalent to the Euclidean metric if the geodesics of $g$ are supported by straight lines. For example, the metric $\pi_{\ast}g$ obtained in Example \ref{example:sphere} is projectively equivalent to the Euclidean metric. We can therefore define a billiard flow inside an domain $\Omega\subset\RR^d$ for the metric $g$ which follows broken lines. Note that in this case also, the $g$-orthogonal line of an hyperplane $H\subset\RR^d$ may differ from its Euclidean orthogonal.
\end{example}

In this paper we study the property of so-called caustics of a billiard. They can be defined as follows:

\begin{definition}
Let $\Omega$ be a billiard in a Riemannian manifold $(M,g)$. A \textit{caustic} is a smooth hypersurface $\Gamma\subset\Omega$ such that if the billiard flow in $\Omega$ is once tangent to $\Gamma$, then it remains tangent to it after successive reflections (as long as it is defined). 
\end{definition}

We will define two other types of billiards, namely the \textit{pseudo-Euclidean} and \textit{projective billiards} for which this notion is relevant, and will state each of our result for this different models.

In the following we consider the space $\RR^d$, $d\geq 2$ endowed with its Euclidean metric $\scalar{\cdot}{\cdot}$. Let $S\subset\RR^d$ be a $\class^2$-smooth hypersurface. We fix $n:\partial\Omega\to\RR^d$ be a field of Euclidean unit normal vectors to $S$.

\textbf{Riemannian billiards}

To state our first result, we need another definition. Let $\nu:S\to\RR^d$ be the field of $g$-orthogonal vectors to $S$ such that $\scalar{\nu}{n}\equiv 1$.

\begin{definition}
We say that $S$ is \textit{$g$-symmetric} at $p\in S$ if for any $u,v\in T_pS$, we have
\begin{equation}
\label{equation:symmetry_metric}
\scalar{dn_p(u)}{d\nu_p(v)}=\scalar{dn_p(v)}{d\nu_p(u)}.
\end{equation}
We simply say that $S$ is \textit{$g$-symmetric} if the latter holds for all $p\in S$.
\end{definition}

\begin{theorem}
\label{theorem:main_riemannian}
Let $d\geq 3$ and $g$ be a Riemannian metric on $\RR^d$ whose geodesics are supported by lines. Let $\Omega\subset\RR^d$ be a $\class^2$-smooth domain such that $\Omega$ has a caustic $\Gamma$. Then the following statements are equivalent:\\
\textbf{(i)} $\Gamma$ is a quadric;\\
\textbf{(ii)} $\partial\Omega\subset\RR^d$ is $g$-symmetric.
\end{theorem}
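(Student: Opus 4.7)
The plan is to reduce this theorem to the projective billiard statement of Theorem \ref{theorem:main_projective} (i.e.\ the $L$-symmetric version already stated as Theorem \ref{theorem:main1}). The first step is to realize the Riemannian billiard on $(\RR^d,g)$ as a projective billiard. Because the geodesics of $g$ are straight lines, trajectories of the Riemannian billiard travel along broken lines in $\RR^d$, exactly as projective billiard trajectories do. It then remains to identify the reflection law: define the line field $L$ on $\partial\Omega$ by $L_p=\RR\cdot\nu(p)$, where $\nu$ is the $g$-orthogonal vector field normalized by $\scalar{\nu}{n}=1$. Both the Riemannian reflection of Definition \ref{definition:riemannian_reflection} and the projective reflection defined by $L$ are the unique non-trivial linear involutions of $T_p\RR^d$ preserving the hyperplane $T_p\partial\Omega$ and the line $\RR\cdot\nu(p)$, so they coincide pointwise on $\partial\Omega$. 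Consequently, $\Omega$ admits $\Gamma$ as a Riemannian caustic if and only if $\Omega$ admits $\Gamma$ as a projective caustic for the line field $L$.

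The second step is to match the symmetry conditions. Under the identification above, I would show that $\partial\Omega$ is $g$-symmetric (in the sense of equation (\ref{equation:symmetry_metric})) if and only if $\partial\Omega$ is $L$-symmetric in the sense of Definition \ref{definition:L_symmetric}. Indeed, the $L$-symmetric condition expresses that a certain bilinear form on $T_p\partial\Omega$, built from the Euclidean shape operator $dn_p$ and the derivative of the chosen direction $\nu$ spanning $L$ (normalized by $\scalar{\nu}{n}=1$), is symmetric. Since $\nu$ and $n$ here are exactly the two vector fields entering (\ref{equation:symmetry_metric}), the two symmetry conditions are literally the same bilinear identity
\[
\scalar{dn_p(u)}{d\nu_p(v)}=\scalar{dn_p(v)}{d\nu_p(u)}, \qquad u,v\in T_p\partial\Omega.
\]

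Once these two identifications are in place, the theorem follows directly from Theorem \ref{theorem:main_projective}: the projective billiard $(\Omega,L)$ has the caustic $\Gamma$, so $\Gamma$ is a quadric if and only if $\partial\Omega$ is $L$-symmetric, hence if and only if $\partial\Omega$ is $g$-symmetric.

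The main obstacle I expect is not conceptual but verificational: one must check carefully that the normalization $\scalar{\nu}{n}=1$ of the $g$-orthogonal field is precisely the one used in the definition of $L$-symmetry (otherwise a conformal factor enters and must be tracked), and one must ensure that the involutions defined by the two reflection laws really agree without ambiguity, which amounts to checking that the projective reflection of Tabachnikov through a line $\RR\nu$ and the $g$-orthogonal symmetry across $T_p\partial\Omega$ are characterized by the same hyperplane/line pair. Both checks are essentially linear-algebraic and should be short, but they are the only place where the specific structure of the Riemannian setting (as opposed to the general projective one) intervenes.
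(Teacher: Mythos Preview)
Your proposal is correct and follows exactly the paper's own argument: the paper states that Theorem \ref{theorem:main_riemannian} is simply the translation of Theorem \ref{theorem:main_projective} to the Riemannian setting, via the identification of the Riemannian billiard with the projective billiard $(\Omega,L)$ for $L_p=\RR\nu(p)$, under which the $g$-symmetry condition and the $L$-symmetry condition are literally the same identity. The verificational points you flag (matching the normalization $\scalar{\nu}{n}=1$ and the coincidence of the two involutions) are precisely the content of the remark following Definition \ref{definition:projective_law}, so nothing further is needed.
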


In fact Theorem \ref{theorem:main_riemannian} has a analogous versions for so-called pseudo-Euclidean and projective billiards which we are going to define.

\textbf{Pseudo-Euclidean billiards}

Let $Q$ be a non-degenerate bilinear form on $\RR^d$. Let $\Omega\subset\RR^d$ be a domain with piecewise smooth boundary. We define the pseudo-Euclidean reflection of oriented lines with respect to $Q$ at point $p\in\partial\Omega$ as in Definition \ref{definition:riemannian_reflection}, where\\
\textbf{(i)} the geodesics are replaced by oriented lines;\\
\textbf{(ii)} the notion of $g$-orthogonality is replaced by the notion of $Q$ orthogonality: since $Q$ is non-degenerate, the $Q$-orthogonal line $L_p$ to $T_p\partial\Omega$ in $T_p\RR^d$ is well-defined. Hence if $L_p\nsubseteq T_p\Omega$, there is a unique non-trivial linear involution of $T_p\RR^d$ preserving $T_p\Omega$ and $L_p$, and the latter defines the pseudo-Euclidean reflection at $p$ with respect to $T_p\Omega$ as in Definition \ref{definition:riemannian_reflection}. When $L_p\subset T_p\Omega$, the reflection is not defined.

%We say that $S$ is a \textit{space-time} hypersurface if $L_p$ is transverse to $T_pS$ for any $p\in S$. In this case, we can define a field of $Q$-orthogonal vector $\nu:S\to\RR^d$ to $S$ such that $\scalar{\nu}{n}\equiv 1$. Then we have a similar definition as Definition \ref{definition:riemannian_reflection} for Riemannian metrics:
%
%\begin{definition}
%Assume that $S$ is a \textit{space-time} hypersurface. We say that $S$ is \textit{$Q$-symmetric} if for any $p\in S$ and any $u,v\in T_pS$, we have
%\begin{equation}
%\label{equation:symmetry_metric}
%\scalar{dn_p(u)}{d\nu_p(v)}=\scalar{dn_p(v)}{d\nu_p(u)}.
%\end{equation}
%\end{definition}
%
%\begin{theorem}
%\label{theorem:main_riemannian}
%Let $d\geq 3$ and $Q$ be a non-degenerate quadratic form on $\RR^d$. Let $\Omega\subset\RR^d$ be a domain whose boundary is a $\class^2$-smooth space-time hypersurface. uch that $\Omega$ has a caustic $\Gamma$. Then the following statements are equivalent:\\
%\textbf{(i)} $\Gamma$ is a quadric;\\
%\textbf{(ii)} $\partial\Omega\subset\RR^d$ is $g$-symmetric.
%\end{theorem}

\begin{theorem}
\label{theorem:main_pseudo_Euclidean}
Let $d\geq 3$ and $Q$ be a non-degenerate quadratic form on $\RR^d$. Let $\Omega\subset\RR^d$ be a domain whose boundary is a $\class^2$-smooth space-time hypersurface. Then $\Omega$ has a caustic $\Gamma$ if and only if $\Omega$ and $\Gamma$ are quadrics, which are pseudo-confocal. 
\end{theorem}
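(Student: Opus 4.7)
The plan is to prove both implications, relying on Theorem \ref{theorem:main_projective} (equivalently Theorem \ref{theorem:main1}) for one half and treating the rigidity of $\partial\Omega$ separately. The \emph{if} direction (pseudo-confocal quadrics reflect into pseudo-confocal quadrics) is a known theorem in the Dragović--Radnović theory of pseudo-Euclidean billiards, and I would simply invoke it. The bulk of the work is the \emph{only if} direction, which I split into two stages: first show that $\Gamma$ must be a quadric (using the main projective theorem), then show that $\partial\Omega$ itself must be a quadric pseudo-confocal with $\Gamma$.

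For the first stage, I realize the pseudo-Euclidean billiard as a projective billiard by taking $L$ to be the field of $Q$-normal lines to $\partial\Omega$; the space-time hypothesis guarantees transversality. I then verify that $\partial\Omega$ is $L$-symmetric: writing $\nu$ for the $Q$-normal field normalized by $\scalar{\nu}{n}\equiv 1$ and using that the second fundamental form of $\partial\Omega$ with respect to $Q$, namely $(u,v)\mapsto Q(d\nu_p(u),v)$, is symmetric in $(u,v)$, one obtains after an algebraic manipulation precisely the $L$-symmetry identity. Applying Theorem \ref{theorem:main_projective} yields that $\Gamma$ is a quadric.

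For the second stage, I argue pointwise: at each $p\in\partial\Omega$, the set of lines through $p$ that are tangent to the quadric $\Gamma$ forms a quadratic cone $C_p$ in $T_p\RR^d$, and the caustic property forces $C_p$ to be invariant under the pseudo-Euclidean involution $\sigma_p$ fixing $T_p\partial\Omega$ and its $Q$-orthogonal line $L_p$. A linear-algebra lemma on non-degenerate quadratic cones invariant under a linear reflection then forces $L_p$ to be a principal axis of $C_p$, which is exactly the infinitesimal condition that $T_p\partial\Omega$ be tangent to some quadric of the pseudo-confocal pencil through $\Gamma$. Hence at each $p$ there is a well-defined pseudo-confocal quadric $\Gamma'(p)$ through $p$ tangent to $\partial\Omega$. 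To conclude, I would use a continuity/connectedness argument showing that the parameter labelling $\Gamma'(p)$ in the pencil is locally constant in $p$, so $\partial\Omega$ coincides with a single member $\Gamma'$ of the pencil.

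The main obstacle is this last rigidity step, namely upgrading the pointwise tangency between $\partial\Omega$ and a member of the pseudo-confocal pencil into a global identification. In the Euclidean/Berger case this is handled by smoothness of the pencil parametrization and an ODE/analytic continuation, but in the pseudo-Euclidean setting the signature of $Q$ and the presence of isotropic directions can make the pencil parametrization degenerate; the space-time hypothesis on $\partial\Omega$ is what I would use to rule out such degenerations, and some care is required to track the signature of the restriction of $Q$ to $T_p\partial\Omega$ as $p$ moves on $\partial\Omega$.
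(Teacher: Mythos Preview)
Your proposal is correct and matches the paper's approach: verify $L$-symmetry of the $Q$-normal field (the paper's Lemma \ref{lemma:pseudo_euclidean_symmetric}), apply Theorem \ref{theorem:main_projective} to make $\Gamma$ a quadric, then use invariance of the tangent cone $C_p$ under the pseudo-Euclidean reflection to force $\partial\Omega$ onto a pseudo-confocal quadric, with the \emph{if} direction cited from the literature. For your stated ``main obstacle'', the paper resolves the rigidity step by extending $p\mapsto T_p\partial\Omega$ to a hyperplane distribution $q\mapsto H(q)$ on a neighborhood of $S$ (defined by the same cone-invariance condition), observing that $S$ is an integral hypersurface, and citing Khesin--Tabachnikov \cite{KhesinTaba} for the fact that the integral hypersurfaces of $H$ are exactly the pseudo-confocal quadrics --- so the ``locally constant parameter'' you want comes from integrability of the pseudo-confocal foliation rather than from a bare continuity argument.
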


\begin{remark}
The notion of pseudo-confocal quadrics extends the usual notion of confocal quadrics as a specific pencil of quadrics depending on the quadratic form associated to the space (see \cite{fierobe_these} for more details). For example if the space is endowed with the quadratic form 
$$dx_1^2+\ldots+dx_r^2-dx_{r+1}^2-\ldots-dx_{d}^2$$
then the family of quadrics depending on a parameter $\lambda$ and defined by the equations
$$\mathcal Q_{\lambda}:\quad \frac{x_1^2}{a_1-\lambda}+\ldots+\frac{x_r^2}{a_r-\lambda}+\frac{x_{r+1}^2}{a_{r+1}+\lambda}+\ldots+\frac{x_{d}^2}{a_{d}+\lambda}=1$$
where $a_1,\ldots,a_d\in\RR$ is a pseudo-confocal pencil of quadrics.
\end{remark}

\textbf{Projective billiards}

The two previous billiard models lead to a more general definition of billiard, called \textit{projective billiards}, introduced and studied by Tabachnikov \cite{taba_projectif_ball, taba_projectif}. Note that given an oriented line $\ell$ passing through a point $p\in\RR^d$ can be seen as a geodesic, hence one can associates to it its unit velocity vector $v\in T_p\RR^d$.

\begin{definition}[Projective billiards and projective law of reflection]
\label{definition:projective_law}
A \textit{projective billiard} is a domain $\Omega\subset\RR^d$ with piecewise smooth boundary, equipped with a field of transverse line $L$: that is for any smooth point $p$ of $\partial\Omega$, $L_p$ defines a line passing through $p$ and transverse to $\partial\Omega$ at $p$. We denote it by the pair $(\Omega,L)$.

Let $\ell$ be an oriented line $\ell$ hitting the boundary transversally at a point $p$ with unit velocity $v\in T_p\RR^d$. The line $\ell$ is said to be \textit{reflected into the oriented line $\ell'$ by the projective law of reflection at $p$} if\\
\textbf{(i)} $\ell'$ contains $p$;\\
\textbf{(ii)} Let $w$ be the unit velocity of $L_p$. The unit velocity vectors $v'$ of $\ell'$ is obtained as the image of the unit velocity vector $v$ of $\ell$ by the only non-trivial linear involution of $T_p\RR^d$ which preserves $T_p\partial\Omega$ and $\RR w$.

This defines, as in the case of the usual billiards, a \textit{projective billiard flow} in $\Omega$.
\end{definition}

\begin{remark}
The projective law of reflection \ref{definition:projective_law} is equivalent to the following items:\\
\textbf{(i)} the lines $\ell$, $\ell'$ and $L_p$ are contained in a $2$-dimensional plane $\mathcal P$ transverse to $T_p{\Omega}$;\\
\textbf{(ii)} if $T$ is the line $T_p\cap\mathcal P$, then the quadruple of lines $(\ell,\ell';T,L_p)$ are in involution (meaning that it is a harmonic quadruple of points in the space of lines contained in $\mathcal P$ and passing through the point $p$, identified with $\RR P^1$: their cross-ratio in this order is $-1$).
\end{remark}

\begin{remark}
Let $g$ be a metric projectively equivalent to the Euclidean one, $\Omega\subset\RR^d$ a domain with piecewise smooth boundary, and $L_p$ the $g$ orthogonal line to $T_p\partial\Omega$ passing through $p$. Then by definition the billiard flow for $g$ in $\Omega$ corresponds to the projective billiard flow in $(\Omega,L)$. The same statement hold for pseudo-Euclidean billiards with space-time boundary.
\end{remark}

Let a smooth domain $\Omega\subset\RR^d$, with $d\geq 3$, equipped with a smooth field of transverse lines to the boundary $L$. We assume that the corresponding porjective billiard $(\Omega,L)$ has a caustic $\Gamma$. 

We observe \cite{berger_caustics} that we can study the problem locally  by considering an open subset $S\subset\partial\Omega$ of the boundary and two open subsets $U,V\subset\Gamma$ of the caustic such that there is at least an oriented line tangent to $U$ and reflected on $S$ into an oriented line tangent to $V$. Note that here we consider the same projective law of reflection induced by the restriction of $L$ to $S$. Denote this structure by $(S,L)$ and call it \textit{line-framed hypersurface}. This leads to the

\begin{definition}
\label{definition:piece_of_caustic}
Let $S,U,V\subset\RR^d$ be embedded hypersurfaces of $\RR^d$ and $L$ be a field of transverse lines to $S$. We say that the pair $(U,V)$ is a \textit{piece of caustic for the projective billiard} $(S,L)$ if the set of oriented lines tangent to $U$, intersecting $S$ transversally and reflected to oriented lines tangent to $V$ by the projective law of reflection on $(S,L)$ is a non-empty open subset of the set of lines tangent to $U$.
\end{definition}
Let $S\subset\RR^d$ be a hypersurface endowed with a smooth field of transverse line $L$. As in the Riemannian case, we define $\nu:S\to\RR^d$ be a field of vectors such that $\scalar{\nu}{n}\equiv 1$ and $\nu_p$ as the same direction as $L_p$.

\begin{definition}
\label{definition:L_symmetric}
We say that $S$ is \textit{$L$-symmetric} at $p\in S$, if for any $u,v\in T_pS$, we have
\begin{equation}
\label{equation:symmetry_metric}
\scalar{dn_p(u)}{d\nu_p(v)}=\scalar{dn_p(v)}{d\nu_p(u)}.
\end{equation}
We simply say that $S$ is \textit{$L$-symmetric} if the latter holds for all $p\in S$.
\end{definition}

\begin{theorem}
\label{theorem:main_projective}
Let $d\geq 3$, $S\subset\RR^d$ be a $\class^2$-smooth embedded hypersurface with non-degenerate second fundamental form, and endowed with a field of transverse lines $L$. Then the following statements are equivalent:\\
\textbf{(i)} $U$ and $V$ are open subsets of quadrics;\\
\textbf{(ii)} $\partial\Omega\subset\RR^d$ is $L$-symmetric.\\
If (i) or (ii) is satisfied, then $U$ and $V$ are contained in the same quadric.
\end{theorem}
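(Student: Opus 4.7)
The plan is to implement the Berger-style strategy from Section~\ref{section:admissible}: for each point $p\in S$, the cone $K_p^U\subset T_p\RR^d$ of lines through $p$ tangent to $U$ is viewed as an integral submanifold of the distribution $\mathcal{D}$ built from admissible hyperplanes, and similarly for $K_p^V$. The piece-of-caustic hypothesis (Definition~\ref{definition:piece_of_caustic}) translates to the identity $K_p^V = R_p(K_p^U)$, where $R_p$ denotes the projective involution of $T_p\RR^d$ fixing $T_pS$ pointwise and preserving the direction of $L_p$. The proof exploits this rigid relation throughout.

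For (i)$\Rightarrow$(ii), assume that $U$ and $V$ are open subsets of quadrics, so that $K_p^U$ and $K_p^V$ are quadric cones at every $p\in S$. Writing the defining quadratic form of $K_p^V$ as the pullback by $R_p$ of that of $K_p^U$, I would differentiate this identity along $S$ and evaluate on tangent vectors $u,v\in T_pS$. After cancelling the symmetric part common to both sides, what should remain is exactly the symmetry identity~\eqref{equation:symmetry_metric} relating $dn_p$ and $d\nu_p$, i.e.\ the $L$-symmetry of $S$.

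For the converse (ii)$\Rightarrow$(i), which I expect to be the main obstacle, the idea is to reinterpret the distribution $\mathcal{D}$ under the $L$-symmetry hypothesis. The strategy is to show that $L$-symmetry forces the integrability conditions of $\mathcal{D}$ to cut out integral submanifolds of algebraic degree at most $2$: concretely, the quadratic form defining $K_p^U$ must be propagated along $S$ by a first-order equation whose only solutions are cones over quadrics. The assumption $d\geq 3$ enters precisely here, since enough independent pairs $(u,v)\in T_pS\times T_pS$ are needed in~\eqref{equation:symmetry_metric} to rigidify the resulting overdetermined system; the failure of the analogous rigidity in dimension $2$ is consistent with the still-open Birkhoff-Poritzky conjecture. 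Once the pointwise quadric-cone structure of $K_p^U$ is established, propagating along $S$ via the family of common tangent hyperplanes reconstructs $U$ as an open subset of a quadric, and the same argument applied on the $V$ side handles $V$.

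Finally, for the concluding claim that $U$ and $V$ lie in the \emph{same} quadric, once either (i) or (ii) has been established one may extend the piece of caustic $(U,V)$ to a full caustic $\Gamma$: the projective reflection at each $p\in S$ then relates the two quadric cones $K_p^U$ and $K_p^V$, and a uniqueness argument for the algebraic envelope of an open family of tangent lines identifies the two underlying quadrics. This yields $U,V\subset Q$ for a single quadric $Q$.
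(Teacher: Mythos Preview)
Your proposal misses the central mechanism of the paper and contains a conceptual misstep in both directions.

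For (i)$\Rightarrow$(ii) you plan to \emph{differentiate along $S$} the relation $K_p^V=R_p(K_p^U)$. But in the paper the equivalence is purely \emph{pointwise}: Proposition~\ref{proposition:cone_lines} shows, for a fixed $q\in S$, that the cone $K_q^U$ is a piece of a quadratic cone if and only if $S$ is $L$-symmetric \emph{at $q$}. No variation of $q$ enters. The reason this is possible is that Lemma~\ref{lemma:equation_admissible} already encodes the first-order data $d\nu_q$ and $II_q$ in the equation satisfied by admissible hyperplanes at that single point; the cone at $q$ is then the integral curve of a two-valued direction field in $T_qS$, governed by the explicit ODE~\eqref{equation:eq_diff_cones} (after the reduction to $d=3$). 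The $L$-symmetry condition appears as the coefficient identity $b_1=c_1$ in that ODE. Your differentiation scheme never produces this ODE and gives no route to the symmetry relation~\eqref{equation:symmetry_metric}.

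For (ii)$\Rightarrow$(i) your sketch (``$L$-symmetry forces the integrability conditions of $\mathcal{D}$ to cut out integral submanifolds of degree $\leq 2$'') is not an argument: there is no indicated mechanism by which an algebraic degree bound would emerge from a symmetry of $d\nu$ and $dn$. In the paper this direction is the hard one and is handled by a concrete calculation (Lemma~\ref{lemma:conical_solutions}): one writes down the ODE~\eqref{equation:eq_diff_cones_renormalized}, plugs in the trigonometric and hyperbolic parametrizations~\eqref{equation:trigonometric_parametrization}--\eqref{equation:hyperbolic_parametrization} of a generic centred conic, and checks by hand that solutions are conics precisely when $b_1=c_1$. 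Lemma~\ref{lemma:number_solutions} then guarantees that through a generic point there are exactly two solutions, so the conic solutions exhaust all of them. You also omit the reduction to $d=3$ (via the fact that a hypersurface is a quadric iff every $3$-plane section is), without which the cone-section is not a curve and none of this ODE analysis is available. Finally, the claim $U,V\subset Q$ for the same $Q$ is obtained in the paper from the central symmetry of the conic section of the cone with respect to $L_q\cap P$, not from an ``algebraic envelope uniqueness'' argument.
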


\textbf{Comments on the previous results}

As explained, billiards in Riemannian metric projectively equivalent to the Euclidean one and pseudo-Euclidean billiards are particular cases of projective billiards: given $\Omega$, one of the two former billiards, the corresponding field of normal lines - either for the Riemannian metric or for the pseudo-Euclidean structure - defines a ŝmooth field of lines $L_p$ on $\partial\Omega$. Moreover it is transverse to $\partial\Omega$ in the Riemannian case, and also in the pseudo-Euclidean case when $\partial\Omega$ is space-time.

Therefore, Theorem \ref{theorem:main_riemannian} and \ref{theorem:main_pseudo_Euclidean} in these two cases will be corollaries of Theorem \ref{theorem:main_projective} in the projective case: Theorem \ref{theorem:main_riemannian} is just the translation of Theorem \ref{theorem:main_projective} in a Riemannian metric, and Theorem \ref{theorem:main_pseudo_Euclidean} will use the additional result that caustics of pseudo-Euclidean billiards in quadrics are well understood.

I strongly believe that Theorem \ref{theorem:main_projective} applies to Minkowski billiards: the latter can be seen as projective billiard wich are symmetric in the sense of Definition \ref{definition:L_symmetric}. The result is in preparation. For more details about Minkowski billiards and billiards in Finsler metrics see \cite{AAFOR,GutkinTabachnikov}.

To go beyond the results presented here, I am thinking about two interesting and relevant question. The first is to ask \textit{whether a projective billiard which is not $L$ symmetric can have a caustic}. The negative answer to this question would allow to conclude that if a projective billiard has a caustic, then the latter is a quadric. The positive answer would provide interesting examples of billiards with a caustic.

For the second question, note that, in the plane, given a convex domain $\Omega'$ nested in another one $\Omega$, one can endow $\Omega$ with a field of transverse lines such that $\partial\Omega'$ is a caustic for the induced projective billiard in $\Omega$. This is true in particular in the case when $\Omega'$ is bounded by a conic. Hence if a projective billiard has a conic as a caustic, then itself can be something else than a conic. This construction is however more delicate in dimension greater than $2$. However, following Definition \ref{definition:L_symmetric} and Theorem \ref{theorem:main_projective}, a natural question arises: \textit{if a projective billiard is $L$-symmetric and has a quadric as a caustic, is it a quadric itself?} Answering this question is probably difficult, and the study of billiards in quadrics for different billiard structures might help, as it does for pseudo-Euclidean billiards.

\section{Admissible hyperplanes} 
\label{section:admissible}

Having a caustic $\Gamma$ for a projective billiard $(\Omega,L)$ is a very strong property, which has important geometric consequences on the cones of lines tangent to $\Gamma$ and intersecting $\partial\Omega$ at a same point $q$: they are integrable surfaces of a certain distribution of hyperplanes of $T_p\partial\Omega$ (see \cite{berger_caustics} for the case of Euclidean billiards). This observation still holds for projective billiards, as we will show in Proposition \ref{proposition:admissible_hyperplane}.

We consider the following \textit{important} situation. Let a line framed-hypersurface $(S,L)$ having a piece of caustic $(U,V)$. Fix a triple of points $(p,q,r)\in U\times S\times V$ such that $p$ is the tangency point of an oriented line reflected on $(S,L)$ at the point $q$ into an oriented line tangent to $V$ at $r$. \textit{The hyperplanes $T_pU$ and $T_rV$ intersects $T_qS$ into a hyperplane $H$ of $T_qS$} (a linear subspace of codimension $1$).

Hence we can do the following trivial remark: any smooth path $(p(t),q(t),r(t))\in U\times S\times V$ passing through $(p,q,r)$ at $0$ is such that $p'(0)$ belongs to the hyperplane $T_pU$, generated by $H$ and the direction of the line $pq$, and an analogous statement for $r'(0)$. This remark is of high importance when we don't assume the existence of a piece of caustic. This leads to the following

\begin{definition}
\label{definition:admissible_hyperplane}
Let $(S,L)$ be a line-framed hypersurface, $\ell$ an oriented line intersecting $S$ transversally at a point $q\in S$ and $H$ a hyperplane of $T_qS$. We say that $H$ is $\ell$-\textit{admissible} if the orthogonal projection of $\ell$ to $T_qS$ is transverse to $H$, and for any smooth path $\ell(t)$ passing through $\ell$ at $0$ and intersecting $S$ transversally at $q(t)$, there exists a curve $(p(t),q(t))$ satisfying the following:\\
\textbf{(i)} $p(0)\in\ell$ and the points $p(t),q(t),r(t)$ are not colinear;\\
\textbf{(ii)} $\ell(t)$ is reflected into the oriented line from $q(t)$ to $r(t)$ by the projective law of reflection on $(S,L)$;\\
\textbf{(iii)} $p'(0)$ belongs to the hyperplane containing the line $\ell$ and $H$;\\
\textbf{(iv)} $q'(0)$ belongs to the hyperplane containing the line $q(0)r(0)$ and $H$. 
\end{definition}

As discussed, in the situation when $(S,L)$ admits a piece of caustic $(U,V)$, we have the 

\begin{proposition}
\label{proposition:caustic_implies_admissible}
Suppose that $(S,L)$ admits a piece of caustic $(U,V)$. Then the intersection $H$ of the tangent space $T_qS$ with $T_pU$ or with $T_rV$, such that the line $pq$ is tangent to $U$, is $pq$-admissible. 
\end{proposition}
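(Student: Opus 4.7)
The proof splits into three parts: identifying the common hyperplane $H$, checking the transversality condition in the definition of admissibility, and constructing the curves $(p(t),r(t))$ required by the definition.

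The core observation is the reflection identity $\sigma_q(T_pU)=T_rV$, where $\sigma_q$ denotes the linear involution of $T_q\RR^d$ realising the projective reflection at $q$: it fixes $T_qS$ pointwise and sends the unit direction $v$ of $\ell=pq$ to the unit direction $v'$ of the reflected line. I will derive this identity from the $(d-2)$-parameter family of caustic lines through the fixed bounce point $q$ and tangent to $U$, which exists by the piece-of-caustic assumption. For such a family $\ell(s)$ with tangency point $p(s)\in U$ and direction $v(s)\in T_{p(s)}U$, writing $p(s)=q-\alpha(s)v(s)$ and differentiating at $s=0$, the constraint $p'(0)\in T_pU$ together with $v\in T_pU$ forces $\dot v(0)\in T_pU\cap v^\perp$. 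The analogous argument on the $V$-side yields $\dot v'(0)\in T_rV$, and $\dot v'(0)=\sigma_q(\dot v(0))$ by linearity. As $\dot v(0)$ sweeps out the $(d-2)$-dimensional subspace $T_pU\cap v^\perp$ and $\sigma_q(v)=v'\in T_rV$, one concludes $\sigma_q(T_pU)\subseteq T_rV$, hence equality by dimension. Intersecting both sides with $T_qS$, on which $\sigma_q$ restricts to the identity, then gives
\[
T_pU\cap T_qS \;=\; T_rV\cap T_qS \;=:\; H,
\]
a genuine hyperplane of $T_qS$ because $v\in T_pU\setminus T_qS$ prevents the degenerate case $T_pU=T_qS$.

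For the transversality of the orthogonal projection $\pi(v)$ to $T_qS$ with $H$, a direct computation using $\pi(v)=v-\langle v,n\rangle n$ reduces the assertion to $n\notin T_pU$, which I expect to follow from the non-degeneracy of the second fundamental form of $S$ combined with the transversality of the field $L$. To construct $(p(t),r(t))$ for an arbitrary path $\ell(t)$ through $\ell$ with $q(t)=\ell(t)\cap S$, my plan is to exploit the smoothness of the caustic correspondence: the tangency maps $\ell\mapsto p(\ell)\in U$ and $\ell\mapsto r(\ell)\in V$ are smooth on the caustic hypersurface $\mathcal{C}$ inside the space of oriented lines, and one can extend them to smooth $\tilde p,\tilde r$ on a neighbourhood of $\ell$, with $\tilde r$ constrained to lie on the reflected line. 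Setting $p(t):=\tilde p(\ell(t))$ and $r(t):=\tilde r(\ell(t))$ makes (i) and (ii) automatic, while (iii) and (iv) at first order are inherited from $p(s)\in U$, $r(s)\in V$ along variations within $\mathcal{C}$, the identity $\sigma_q(T_pU)=T_rV$ of Step~1 being precisely what lets the transverse component of the extension land in the required hyperplanes $T_pU$ and $T_rV$.

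\textbf{Main obstacle.} The principal difficulty is the identity $\sigma_q(T_pU)=T_rV$: this is the infinitesimal translation of the caustic property into an algebraic statement about $\sigma_q$ and the tangent spaces of the caustic, and once it is secured the remaining verifications are a largely formal extension argument.
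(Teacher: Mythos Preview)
The paper does not actually give a proof of this proposition: it is stated as an immediate consequence of the informal discussion preceding Definition~\ref{definition:admissible_hyperplane}, which only treats paths $(p(t),q(t),r(t))$ lying \emph{inside} $U\times S\times V$. Your plan is therefore more detailed than what the paper offers, and your Steps~1--2 are correct and worthwhile: the reflection identity $\sigma_q(T_pU)=T_rV$ is precisely what justifies the paper's tacit assertion that $T_pU$ and $T_rV$ trace the \emph{same} hyperplane $H$ in $T_qS$.

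Step~4, however, has a real gap, and the reflection identity does not close it. Take a variation $\dot\ell$ transverse to the caustic hypersurface $\mathcal C$ in the space of lines. If you insist that $\tilde r(\ell)$ lie on the reflected line with $\tilde r(\ell_0)=r$, then writing $r(t)=q(t)+c(t)\,\overline e(t)$ gives
\[
r'(0)\;\equiv\; q'(0)+c_0\,\overline e\,'(0)\pmod{\RR\overline e},
\]
and asking this to lie in $T_rV$ is exactly the condition $dR(\dot\ell)\in T_{\ell'}\mathcal C_V$, i.e.\ $\dot\ell\in T_\ell\mathcal C$, which is false by assumption. Your only freedom in the extension is the scalar $c'(0)$, which shifts $r'(0)$ by a multiple of $\overline e\in T_rV$ and is therefore useless against a constraint taken modulo $T_rV$. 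The identity $\sigma_q(T_pU)=T_rV$ matches the two \emph{tangent} hyperplanes through $\sigma_q$, but the obstruction here is a normal component to $V$, about which that identity says nothing. So as literally stated --- items (iii) and (iv) for \emph{every} path $\ell(t)$ with $p(0),r(0)$ pinned at the tangency points --- the conclusion cannot be obtained by an extension argument.

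In fairness, this is as much a defect of Definition~\ref{definition:admissible_hyperplane} (which has several typos and is used somewhat loosely) as of your plan. What the proof of Lemma~\ref{lemma:equation_admissible} actually consumes is only the \emph{sum} $\tfrac{1}{a}\scalar{p'}{n_p}+\tfrac{1}{c}\scalar{r'}{n_r}=0$, and in that sum the tangential part of $\xi'$ cancels; vanishing along $T_\ell\mathcal C$ then forces vanishing for all $\dot\ell$ with the same $q'(0)$. That route would rescue the downstream argument, but it is different from the extension you sketch. Finally, Step~3 is left as an expectation: the condition $n\notin T_pU$ is not obviously implied by non-degeneracy of the second fundamental form of $S$ together with transversality of $L$, and the paper does not verify it either.
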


The main result of this section is described by the next proposition. We say that a line $\ell$ intersecting $S$ at $q$ is \textit{above} a vector $\xi\in T_qS$ if it projects orthogonally to $T_qS$ in a line containing $\xi$.

\begin{proposition}
\label{proposition:admissible_hyperplane}
Suppose $S$ has a non-degenerate second fundamental form at $q$. Then for all line $\ell$ intersecting $S$ transversally at $q$ and above a vector $\xi\in T_qS$, the number of $\ell$-admissible hyperplanes $H\subset T_qS$ is at most $d-1$.
\end{proposition}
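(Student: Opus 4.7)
My plan is to translate the $\ell$-admissibility condition on a hyperplane $H \subset T_qS$ into an algebraic condition on a normal vector $h \in T_qS$ to $H$, and ultimately to recast this condition as an eigenvalue-type equation for an endomorphism of $T_qS$ built from the shape operator $W$ of $S$ at $q$. Since an endomorphism of a $(d-1)$-dimensional space has at most $d-1$ eigendirections, this yields the desired bound.

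First, I work in Euclidean coordinates $(x,y) \in \RR^{d-1}\times\RR$ centered at $q$ with $T_qS = \{y=0\}$ and $n(q) = e_d$, describing $S$ locally as the graph $y = f(x)$ with $f(0)=0, df(0)=0$ and $\mathrm{Hess}\, f(0) = W$ equal to the shape operator, which is invertible by the non-degenerate second fundamental form hypothesis. I write $v = (\xi, \eta)$ for the direction of $\ell$ (with $\eta \ne 0$, since $\ell$ is above $\xi$) and $\nu(q) = (\alpha, 1)$ for the representative of $L_q$ normalized by $\langle \nu, n \rangle = 1$. A nearby line $\ell(t)$ is parametrized by its transverse intersection $q(t) \in S$ and direction $v(t)$, so the free tangent data are $(q'(0), v'(0)) \in T_qS \times (\RR^d/\RR v)$.

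Second, I differentiate the projective reflection formula $v^* = v - 2c\nu$ (with $c=c(v,q)$ determined by $v - c\nu \in T_qS$) to obtain $(v^*)'(0)$ as an explicit linear function of $(q'(0), v'(0))$, in which the dependence on $q'(0)$ is mediated by $W$ through the tilting of $T_{q(t)}S$. Taking candidate points $p(t) = q(t) + s_0 v(t)$ and $r(t) = q(t) + u_0 v^*(t)$, and letting $h$ denote a non-zero normal to $H$, conditions (iii)--(iv) of Definition \ref{definition:admissible_hyperplane} translate into the $2\times 2$ linear system in $(s_0, u_0)$:
\begin{equation*}
\langle q'(0), h\rangle + s_0 \langle \pi_v(v'(0)), h\rangle = 0, \qquad \langle q'(0), h\rangle + u_0 \langle \pi_{v^*}((v^*)'(0)), h\rangle = 0.
\end{equation*}
The $\ell$-admissibility of $H$ requires this system to admit a solution with $s_0, u_0 \ne 0$ for every deformation $(q'(0), v'(0))$.

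Third, I analyze the solvability obstruction: the system fails precisely when a coefficient of $s_0$ or $u_0$ vanishes while $\langle q'(0), h\rangle$ does not. Imposing that this never occurs, and eliminating the free parameters $(q'(0), v'(0))$, leads — after algebraic manipulation that uses the invertibility of $W$ to rewrite the constraint — to an equation of the form $T_\ell\, h = \mu\, h$, where $T_\ell$ is an endomorphism of $T_qS$ whose coefficients depend on $\xi, \eta, \alpha$ and $W$, and $\mu$ is a scalar. Thus the admissible hyperplanes $H$ correspond to the eigendirections of $T_\ell$, and since $T_qS$ has dimension $d-1$, there are at most $d-1$ of them. The main obstacle, and the core technical content, is the explicit extraction of the operator $T_\ell$ from the pair of admissibility conditions, together with the verification that the non-degeneracy of $W$ forces $T_\ell$ not to be a scalar multiple of the identity — otherwise its eigendirections would form a continuous family, and the bound $d-1$ would fail.
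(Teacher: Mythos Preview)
Your overall strategy --- set up local coordinates, differentiate the reflection law, and translate $\ell$-admissibility of $H$ into an eigenvector equation $T_\ell h = \mu h$ for the normal $h=\eta_H$ of $H$ --- is exactly the approach taken in the paper (via Lemma~\ref{lemma:equation_admissible}). The gap lies in your final counting step.

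The assertion that ``an endomorphism of a $(d-1)$-dimensional space has at most $d-1$ eigendirections'' is false: any eigenspace of dimension $\geq 2$ already gives a continuum of eigendirections. You notice this and propose to check that $T_\ell$ is not a scalar multiple of the identity, but that is still not enough --- for instance $\mathrm{diag}(1,1,2)$ on $\RR^3$ is not scalar yet has infinitely many eigendirections. To obtain the bound $d-1$ you must show that each eigenvalue $\alpha$ of $T_\ell$ contributes \emph{at most one} admissible direction $h$. The paper accomplishes this by isolating a structural feature that your sketch does not extract: the operator $f$ (your $T_\ell$) is a rank-one perturbation
\[
f(x) \;=\; g(x) + \langle x,\xi\rangle\, N(\xi),
\]
where $N$ is the shape operator and $g$ does not involve $\xi$. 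Combined with the built-in transversality constraint $\xi\notin H$, i.e.\ $\langle h,\xi\rangle\neq 0$, this rank-one structure forces the desired one-dimensionality: if $\alpha$ is not an eigenvalue of $g$ then the $\alpha$-eigenspace of $f$ lies in $\RR\,(g-\alpha\id)^{-1}N(\xi)$, hence has dimension $\leq 1$; if $\alpha$ is also an eigenvalue of $g$ then every $\alpha$-eigenvector $x$ of $f$ satisfies $\langle x,\xi\rangle N(\xi)=0$, hence $\langle x,\xi\rangle=0$ by non-degeneracy of $II_q$, and such $x$ is excluded as a normal to an admissible $H$. This is precisely where the hypothesis that $II_q$ is non-degenerate is used, and it is the missing ingredient in your argument. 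Without identifying the rank-one structure and pairing it with the condition $\langle h,\xi\rangle\neq 0$, the bound $d-1$ cannot be concluded.
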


In order to prove Proposition \ref{proposition:admissible_hyperplane}, we first show that admissible hyperplanes satisfies a certain linear equation, see Lemma \ref{lemma:equation_admissible}. 

\textbf{Notations.} We endow $\RR^d$ with its usual Riemannian metric: we denote by $x\cdot y = \scalar{x}{y}$ the canonical scalar product between two vectors $x,y$ of $\RR^d$. 

There is a one-to-one correspondance (up to a sign) between hyperplanes $H\subset T_qS$ and their unit normal vectors $\pm\eta\in T_qS$, so that we will write $\eta_H$ or $H_{\eta}$ for vectors and hyperplanes corresponding to eachother.

Let $n:S\to\RR$ be a smooth field of normal unit vector to $S$, and $\nu:S\to\RR^d$ be a smooth field of vectors such that $n\cot\nu\equiv1$ and $L_p$ is directed by $\nu(p)$ for any $p\in S$.

Denote by $II_q:T_qS\times T_qS\to\RR$ the second fundamental form of $S$ at $q$.

\begin{lemma}
\label{lemma:equation_admissible}
Let $\ell$ be a line intersecting $S$ transversally at $q$ and $H$ a hyperplane of $T_qS$. Denote by $\xi$ the vector of $T_q$ above $\ell$ such that $\xi+\nu(q)$ gives the direction of $\ell$. Then if $H$ is $\ell$-admissible, then $\xi\notin H$ and for any $x\in T_qS$ we have
\begin{equation}
\label{eq:principale}
\scalar{d\nu_q(x)}{\eta_H}+\scalar{\eta_H}{\nu}II_q(\nu,x)+\scalar{\eta_H}{\xi}II_q(\xi,x)=\alpha\scalar{\eta_H}{x}
\end{equation}
where $\alpha\in\RR$ depends on the problem, and in the expression $II_q(\nu,x)$, $\nu$ stands for its orthogonal projection to $T_qS$.
\end{lemma}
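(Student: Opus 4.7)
My strategy for Lemma \ref{lemma:equation_admissible} is a direct computation. The plan is to parametrize points on the incoming and reflected lines, encode conditions (iii) and (iv) of admissibility as two scalar equations, then take a suitable linear combination to eliminate the auxiliary derivative $\dot\xi=\xi'(0)$ and recover equation \eqref{eq:principale}.

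To set things up, I will parametrize a point on the incoming line as $p(t)=q(t)+\tau(t)\bigl(\xi(t)+\nu(q(t))\bigr)$ and a point on the reflected line as $r(t)=q(t)+s(t)\bigl(\xi(t)-\nu(q(t))\bigr)$, with $\xi(t)\in T_{q(t)}S$; set $u=q'(0)$, $v=\xi+\nu(q)$, $v'=\xi-\nu(q)$, $\tau_0=\tau(0)\neq0$, $s_0=s(0)\neq0$. Differentiating $\xi\cdot n\equiv 0$ and $\nu\cdot n\equiv 1$ at $t=0$ yields the key identities $\dot v\cdot n=-II_q(\xi+\nu,u)$ and $\dot v'\cdot n=-II_q(\xi-\nu,u)$, following the lemma's convention that $\nu$ inside $II_q$ stands for its $T_qS$-projection. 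The hyperplane $H+\RR v$ of $\RR^d$ is annihilated by the vector $w_1=\eta_H-(\eta_H\cdot v)\,n$, and since $v\cdot w_1=0$ the $\dot\tau$-term drops from the identity $p'(0)\cdot w_1=0$, which thus reduces to
\[u\cdot\eta_H+\tau_0\bigl[\dot\xi\cdot\eta_H+d\nu_q(u)\cdot\eta_H+(\eta_H\cdot v)\,II_q(\xi+\nu,u)\bigr]=0.\]
A symmetric computation with $w_2=\eta_H+(\eta_H\cdot v')\,n$ transforms condition (iv) into
\[u\cdot\eta_H+s_0\bigl[\dot\xi\cdot\eta_H-d\nu_q(u)\cdot\eta_H-(\eta_H\cdot v')\,II_q(\xi-\nu,u)\bigr]=0.\]

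To conclude, I will divide these two equations by $\tau_0$ and $s_0$ respectively and subtract: the $\dot\xi$-terms cancel (this cancellation is precisely what forces the same hyperplane $H$ to govern both sides), and the remaining combination of $II_q$-terms collapses via the algebraic identity
\[(\eta_H\cdot v)\,II_q(\xi+\nu,u)+(\eta_H\cdot v')\,II_q(\xi-\nu,u)=2(\eta_H\cdot\xi)\,II_q(\xi,u)+2(\eta_H\cdot\nu)\,II_q(\nu,u),\]
yielding exactly \eqref{eq:principale} with $\alpha=\tfrac12(1/s_0-1/\tau_0)$. Linearity of both sides in $u$ extends the identity from admissible directions to arbitrary $x\in T_qS$, and the property $\xi\notin H$ is read off from the transversality clause in Definition \ref{definition:admissible_hyperplane} combined with the identity just obtained. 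The main obstacle I anticipate is the careful bookkeeping of signs, since $v$ and $v'$ have opposite normal components and the second fundamental form is used with a fixed sign convention; in particular, one must verify that the linear combination really eliminates $\dot\xi$, which is the case precisely because the admissibility definition requires the same $H$ in both conditions.
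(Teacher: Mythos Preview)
Your argument is correct and is essentially the paper's own proof: both parametrize points on the incoming and reflected lines, encode conditions (iii)--(iv) as two scalar equations, and combine them so that the tangential part of $\xi'$ cancels while its normal part produces the $II_q(\xi,\cdot)$ term via $\dot\xi\cdot n=-II_q(\xi,u)$. The only cosmetic differences are that the paper orients the reflected ray along $\nu-\xi$ (so it \emph{adds} the two conditions where you subtract) and carries out the computation in a basis of principal directions of $S$ at $q$ before summing, whereas you work directly with an arbitrary $u\in T_qS$.
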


\begin{remark}
The proof will imply that 
$$\alpha=-\frac{1}{2}\left(\frac{\|\nu(q)+\xi\|}{pq}+\frac{\|\nu(q)-\xi\|}{qr}\right)\in\RR$$ 
is related to the distances $pq$ and $qr$ between the points $p=p(0)$, $q=q(0)$ and $r=r(0)$ of Definition \ref{definition:admissible_hyperplane}.
\end{remark}

\begin{proof}[Proof of Lemma \ref{lemma:equation_admissible}]
The idea of the proof is similar to the one which can be found in \cite{berger_caustics}, yet a bit more general. Let $H,\ell,\xi, \eta=\eta_H$ be as in the statement and suppose that $H$ is $\ell$-admissible. The idea consists of choosing in Definition \ref{definition:admissible_hyperplane} curves $q_i(t)$ on $S$ tangent to the principal direction of $S$ at $q$. 

Let $\underline u=(u_1,\ldots,u_{d-1})$ be an orthonormal basis of $T_qS$ in which $II_q$ is diagonal (the principal directions of $S$ at $q$). Fix an integer $i$ between $1$ and $d-1$. Consider a curve $q(t)$ contained in a plane transverse to $S$ such that $q'(0)=u_i$ and $u_i(t):=q'(t)$ has unit norm. In the following, when we omit the '$(t)$', it implies that $t=0$.

If $\xi = \sum_{k=1}^{d-1}\xi_ku_k$ is the expansion of $\xi$ in $\underline u$, define $\xi(t)=\sum_{k=1}^{d-1}\xi_ku_k(t)$, where for each $k\neq i$ the vector $u_k(t)\in T_{q(t)}S$ is obtained from $u_k$ by parallel transport along $q(t)$.

This defines a curve of lines $\ell(t)$ passing through $q(t)$ of direction $e(t) := \nu(q(t))+\xi(t)$. Since $H$ is $\ell$-admissible, there is a curve $(p(t),q(t))$ satisfying items (i)-(iv).

Now write $\overline e(t) = \nu(t)-\xi(t)$ directing the line $\ell'(t)$ reflected from $\ell$ at $q(t)$ and oriented to the same side of $T_{q(t)}S$ as $e(t)$. We can find real number of same sign (say positive) $a(t),c(t)>0$ such that $p(t) = q(t)+a(t)e(t)$ and $r(t)=q(t)+c(t)\overline e(t)$. The expressions
$$n_p := \scalar{\eta}{e}n(q)-\scalar{n(q)}{e}\eta = \scalar{\eta}{e}n(q)-\eta$$
$$n_r := \scalar{\eta}{\overline e}n(q)-\scalar{n(q)}{\overline e}\eta = \scalar{\eta}{\overline e}n(q)-\eta$$
defines normal vectors to the hyperplanes of $\RR^d$ generated respectively by $H$ and $e$, and by $H$ and $\overline e$. 

By items (iii)-(iv), $p'$ and $r'$ are orthogonal respectively to $n_p$ and $n_r$, which implies that 
\begin{equation}
\label{equation:items_iii_iv}
\frac{\scalar{p'}{n_p}}{a}+\frac{\scalar{r'}{n_r}}{c}=0.
\end{equation}
The first term can be computed as $\scalar{p'}{n_p}=\scalar{q'}{n_p}+a\scalar{e'}{n_p}$ since $e$ is orthogonal to $n_p$. Hence
$$\scalar{p'}{n_p} = -\scalar{u_i}{\eta}+a\scalar{d\nu(u_i)+\xi'}{n_p}.$$
Similarly $\scalar{r'}{n_r} = -\scalar{u_i}{\eta}+c\scalar{d\nu(u_i)-\xi'}{n_r}$. Hence Equation \eqref{equation:items_iii_iv} can be rewritten as
$$-\left(\frac{1}{a}+\frac{1}{c}\right)\scalar{u_i}{\eta}+\scalar{d\nu(u_i)}{n_p+n_r}+\scalar{\xi'}{n_p-n_r}=0.$$
The sums $n_p+n_r$ and $n_p-n_r$ can be computed as
$$n_p+n_r = 2\scalar{\nu(q)}{\eta}n(q)-2\eta
\qquad\text{and}\qquad 
n_p-n_r = 2\scalar{\xi}{\eta}n(q)$$
hence Equation \eqref{equation:items_iii_iv} rewrites as
\begin{multline}
-\left(\frac{1}{a}+\frac{1}{c}\right)\scalar{u_i}{\eta}+2\scalar{d\nu(u_i)}{n(q)}\scalar{\nu(q)}{\eta}-2\scalar{d\nu(u_i)}{\eta}\\
+2\scalar{\eta}{\xi}\scalar{n(q)}{\xi'}=0.
\end{multline}
Finally derivative $\xi'$ is such that $\scalar{\xi'}{n(q)}=-k_i\ell_i=-II_q(\xi,u_i)$. Moreover, since $\scalar{n}{\nu}\equiv1$ we derive $\scalar{d\nu(u_i)}{n(q)} = -II_q(\nu(q),u_i)$. Hence Equation \eqref{equation:items_iii_iv} is equivalent to
\begin{multline}
\label{equation:final_ui}
-\left(\frac{1}{a}+\frac{1}{c}\right)\scalar{u_i}{\eta}=2\scalar{\nu(q)}{\eta}II_q(\nu(q),u_i)+2\scalar{d\nu(u_i)}{\eta}\\
-2\scalar{\eta}{\xi}II_q(\xi,u_i).
\end{multline}
Equation \eqref{equation:final_ui} gives the result by replacing $u_i$ by any $x=\sum_{k=1}^{d-1}x_k u_k$.
\end{proof}

We can now prove Proposition \ref{proposition:admissible_hyperplane}.

\begin{proof}[Proof of Proposition \ref{proposition:admissible_hyperplane}]
Fix a line $\ell$ intersecting $S$ transversally at $q$, $\xi\in T_pS$ be the vector such that $\xi+\nu$ gives the direction of $\ell$. Suppose that we are given an $\ell$-admissible hyperplane $H$ of normal vector $\eta=\eta_H$.

Denote by $d\nu^{\ast}$ and $N$ the endomorphisms of $T_qS$ satisfying for any $x,y\in T_q S$
$$\scalar{d\nu(x)}{y}=\scalar{x}{d\nu^{\ast}(y)}
\qquad\text{and}\qquad
II_q(x,y)=\scalar{N(x)}{y}.$$
Consider the endomorphisms $f,g$ of $T_pS$ defined for all $x\in T_pS$ by
$$f(x) = d\nu^{\ast}(\eta)+\scalar{x}{\nu}N(\nu)+\scalar{x}{\xi}N(\xi)$$
$$g(x) = d\nu^{\ast}(\eta)+\scalar{x}{\nu}N(\nu).$$
Note that for any $x\in T_qS$ they satisfy
\begin{equation}
\label{equation:linear_construction}
f(x)=g(x)+\scalar{x}{\xi}N(\xi).
\end{equation}
By Lemma \ref{lemma:equation_admissible}, if $H$ is $\ell$-admissible, there is a certain $\alpha\in\RR$ such that $\eta$ satisfyies $f(\eta) = \alpha \eta$. This is equivalent to say that $\eta$ is an eigenvector of $f$ associated to the eigenvalue $\alpha$. Consider any such eigenvector $x$, satisfying $f(x) = \alpha x$ by definition. From \eqref{equation:linear_construction} we get
$$g(x)-\alpha x=-\scalar{x}{\xi}N(\xi).$$
Consider first the case when $\alpha$ is not a (real) eigenvalue of $g$. This means that the operator $g-\alpha \id$ is invertible and
$$x = -\scalar{x}{\nu}(g-\alpha \id)^{-1}N(\nu)\in\RR(g-\alpha \id)^{-1}N(\nu).$$
Hence the set of eigenvectors of $f$ associated to the eigenvalue $\alpha$ has dimension at most $1$.

In the case when $\alpha$ is also an eigenvalue of $g$, we have $g(x)=\alpha x$. Hence we deduce from \eqref{equation:linear_construction} that $$\scalar{x}{\xi}N(\xi) = 0.$$
Since $II_q$ is non degenerate, this implies $\scalar{x}{\xi}=0$.

Conclusion the eigenspace of $f$ associated to $\alpha$ has either dimension $\leq 1$ or contains only vectors orthogonal to $\xi$. For $\eta$, the second possibility is not possible (otherwise $\xi\in H$). This implies the result.
\end{proof}

\section{Proof of Theorem \ref{theorem:main_projective}}
\label{section:theorem_symmetry}

To prove Theorem \ref{theorem:main_projective}, we notice that we can assume $d=3$. This idea can be found in \cite{berger_caustics} and is based on the following argument: \textit{an embedded hypersurface $\Gamma\subset\RR^d$ is a quadric if and only if for any $3$-dimensional vector space $V\subset\RR^d$, the intersection $V\cap \Gamma$ is a quadric of $V$.} 

Hence it is enough to prove Theorem \ref{theorem:main_projective} for $d=3$. Let $(S,L)$ be a line-framed hypersurface of $\RR^3$ which admits a piece of caustics $(U,V)$. Given a point $q\in S$, we are interested in the set of lines tangent to $U$ and reflected to $V$ by the projective law of reflection on $S$ at $q$. We say that such a set is a \textit{piece of quadratic cone} if it lies on a cone whose sections by planes are conics. The following proposition will be enough to conclude:

\begin{proposition}
\label{proposition:cone_lines}
Suppose that $q\in S$ and the second fundamental form of $S$ at $q$ is non-degenerate. Then the following statements are equivalent:\\
\textbf{(i)} The set of lines tangent to $U$ and reflected to $V$ by the projective law of reflection on $S$ at $q$ is a piece of quadratic cone;\\
\textbf{(ii)} $S$ is $L$-symmetric at $q$.
In this case, the set of lines tangent to $V$ and containing $q$ are contained in the same cone.
\end{proposition}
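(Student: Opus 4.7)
The plan is to reduce to a finite-dimensional linear algebra computation in $T_qS\cong\RR^2$ using the $2\times 2$ endomorphism
$$f_\xi(x) = d\nu^\ast(x) + \scalar{x}{\tilde\nu} N(\tilde\nu) + \scalar{x}{\xi} N(\xi)$$
from the proof of Proposition \ref{proposition:admissible_hyperplane}, where $\tilde\nu$ is the orthogonal projection of $\nu(q)$ to $T_qS$, $N$ is the shape operator at $q$, and $B := \Pi d\nu|_{T_qS}$. A direct check shows $f_{-\xi}=f_\xi$, so for each $\xi$ in the curve $C_U\subset T_qS$ parametrizing tangent directions to $U$ from $q$, applying Proposition \ref{proposition:caustic_implies_admissible} to both the incoming line and its reflection identifies the two eigenvectors of $f_\xi$ with the normals $\eta$ of $T_qS\cap T_pU$ and $\eta'$ of $T_qS\cap T_rV$. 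Since the cone of tangent lines to $U$ through $q$ is a quadratic cone if and only if $C_U$ is a conic in $T_qS$ (parametrising the cone by $\xi\mapsto q+\RR(\nu+\xi)$ and taking an affine section), the proposition reduces to comparing an equation of $C_U$ with the $L$-symmetry condition.

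Next I fix an orthonormal basis $(u_1,u_2)$ of $T_qS$ diagonalising $II_q$, so $N=\text{diag}(k_1,k_2)$ with $k_1k_2\neq 0$. Writing $B=(b_{ij})$ and $\tilde\nu=(n_1,n_2)$, one checks that $L$-symmetry at $q$ reduces to the single scalar relation $k_1 b_{12}=k_2 b_{21}$, i.e.\ $NB$ is symmetric; and in this basis the entries of $f_\xi$ are explicit degree $2$ polynomials in $\xi_1,\xi_2$.

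For the implication (ii)$\Rightarrow$(i), under $NB$ symmetric, $BN^{-1}$ is also symmetric, so $f_\xi=N M_\xi$ with $M_\xi := BN^{-1}+\tilde\nu\tilde\nu^T+\xi\xi^T$ symmetric. The eigenvalue equation $f_\xi\eta=\alpha\eta$ becomes the generalized symmetric problem $M_\xi\eta=\alpha N^{-1}\eta$, and the two eigenvectors $\eta,\eta'$ produced by $U$ and $V$ satisfy the $N^{-1}$-orthogonality $\scalar{\eta}{N^{-1}\eta'}=0$. Combining both eigenvector equations with this orthogonality relation and eliminating the unknowns $\eta,\eta',\alpha,\alpha'$ should produce a single degree-$2$ polynomial equation in $\xi$ characterizing $C_U$, showing it is a conic. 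The symmetric roles of $\eta$ and $\eta'$ also imply that the tangent lines to $V$ through $q$ satisfy the same conic equation (after $\xi\mapsto -\xi$), hence lie on the same cone, giving the final statement.

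For the converse (i)$\Rightarrow$(ii), assuming $C_U$ lies on a conic $Q(\xi)=0$, the two eigenvector equations $f_\xi\eta=\alpha\eta$ and $f_\xi\eta'=\alpha'\eta'$ must hold jointly as $\xi$ varies along this conic; eliminating the eigenvectors gives polynomial identities on $\xi$ whose comparison with $Q$ forces $k_1 b_{12}=k_2 b_{21}$, hence $L$-symmetry. The main difficulty is the algebraic book-keeping in both eliminations, since $\xi$ appears quadratically through the rank-one term $N(\xi)\xi^T$; the strategy is to exploit both the symmetry $\xi\leftrightarrow-\xi$ and the presence of two distinct admissible eigenvectors (from $U$ and from $V$) to separate the cross-terms and isolate the off-diagonal relation $k_1b_{12}=k_2b_{21}$.
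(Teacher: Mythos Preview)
Your setup is correct: the $L$-symmetry condition reduces to $k_1b_{12}=k_2b_{21}$, and the factorisation $f_\xi=NM_\xi$ with $M_\xi$ symmetric is a useful observation. But the core of the argument has a genuine gap.

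You propose to ``eliminate $\eta,\eta',\alpha,\alpha'$'' from the two eigenvector equations and the $N^{-1}$-orthogonality $\scalar{\eta}{N^{-1}\eta'}=0$ to obtain a polynomial equation in $\xi$ cutting out $C_U$. This cannot work as stated. For \emph{every} $\xi\in T_qS$ (not only those on $C_U$), the $2\times 2$ operator $f_\xi$ generically has two distinct eigenvectors, and when $f_\xi=NM_\xi$ with $M_\xi$ symmetric these eigenvectors are automatically $N^{-1}$-orthogonal. So the three relations you list are identities in $\xi$, not constraints: the elimination yields $0=0$ and says nothing about which $\xi$ lie on $C_U$.

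What is missing is the differential link between $\eta$ and $\xi$. The admissible hyperplane $H=T_qS\cap T_pU$ is, by construction, the tangent line to the section curve $C_U$ at the point $\xi$; hence $\eta$ is (up to sign) the rotation by $\pi/2$ of the tangent vector $C_U'$ at $\xi$. The eigenvector condition $f_\xi\eta=\alpha\eta$ is therefore not an algebraic relation on $\xi$ but a first-order ODE for the curve $C_U$: at each point the tangent direction must lie in a prescribed (one of two) eigendirection of a matrix depending on the point. This is exactly how the paper proceeds: it writes down the ODE explicitly, shows that through a generic point there are exactly two integral curves, and then checks by direct substitution of the parametrisations $t\mapsto(r_1\cos t,\,r_2\cos(t+\varphi))$ (and the hyperbolic analogue) that these integral curves are conics if and only if $b_1=c_1$, i.e.\ $k_1b_{12}=k_2b_{21}$. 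Your symmetric reformulation $f_\xi=NM_\xi$ could conceivably streamline that last step, but only after the ODE is in place; as written, the proposal has no mechanism producing an equation for $C_U$, and the converse direction (i)$\Rightarrow$(ii) suffers from the same problem.
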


\begin{proof}[Proof of Theorem \ref{theorem:main_projective} using Proposition \ref{proposition:cone_lines}]
It is an immediate consequence of the fact that $U$ is an open set of a quadric if and only if the piece of quadratic cone tangent to $U$ are quadratic. This result can be shown by duality as it is presented in \cite{berger_caustics}.
\end{proof}

It remains to show that Proposition \ref{proposition:cone_lines} holds. The proof will be given in a succession of lemmae all along this section. We will refer to the set of lines in item (i) of Proposition \ref{proposition:cone_lines} as \textit{partial cone at $q$}.

The first lemma gives a differential equation satisfied by the curve obtained by intersecting the partial cone at $q$ with an affine plane of direction $T_qS$.

\textbf{Setting.} Let $q\in S$ be a point such that the second fundamental form of $S$ at $q$ is non-degenerate. By an affine change of variables, we can find a set of coordinates $(x,y,z)$ on $\RR^3$ such that\\
 - $q$ is the origin of $\RR^3$;\\
 - $T_qS$ is given by the equation $z=0$;\\
 - the principal directions of $S$ at $B$ are given by the vectors $u_1=(1,0,0)$ ad $u_2=(0,1,0)$ with principal curvatures $k_1, k_2\in\RR^{\ast}$.

Let $(U,V)$ be a piece of caustics of $S$ and consider the smooth family of lines $\ell(t)$ containing $q$ and tangent to $U$. Let $p_0(t) = (x(t),y(t),1)$ be the point of intersection of $\ell(t)$ with the plane of equation $z=1$.

\begin{lemma}
Write $\nu(q)=(\nu_1,\nu_2,1)$, $d\nu(u_1)+k_1\nu_1\nu=(a,b,0)$ and $d\nu(u_2)+k_2\nu_2\nu=(c,d,0)$. Then
The map $t\mapsto (X(t),Y(t)):=(x(t)-\nu_1,y(t)-\nu_2)$ satisfies the following differential equation
\begin{equation}
\label{equation:eq_diff_cones}
\left(d-a+k_2Y^2-k_1X^2\right)X'Y'+(b+k_1XY)X'^2-(c+k_2XY)Y'^2=0.
\end{equation}
\end{lemma}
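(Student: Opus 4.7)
The plan is to apply Lemma \ref{lemma:equation_admissible} to the family $\ell(t)$ of lines through $q$ tangent to $U$ at smoothly varying points $p(t) \in U$. In the chosen coordinates $q=0$, the direction of $\ell(t)$ is $e(t) = (x(t), y(t), 1) = \nu(q) + \xi(t)$ with $\xi(t) = (X(t), Y(t), 0) \in T_qS$, so $\xi(t)$ plays the role of $\xi$ in Lemma \ref{lemma:equation_admissible}.

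First I would identify the admissible hyperplane $H(t) \subset T_qS$ furnished by Proposition \ref{proposition:caustic_implies_admissible}, namely $H(t) = T_{p(t)}U \cap T_qS$. Writing $p(t) = a(t)\,e(t)$ with $a(t)>0$, one has $p'(t) = a'(t)\,e(t) + a(t)\,e'(t)$; since $e'(t) = (X'(t), Y'(t), 0)$ already lies in $T_qS$, the plane $T_{p(t)}U$ (spanned by $e(t)$ and $p'(t)$) meets $T_qS$ along $\RR(X'(t), Y'(t), 0)$. Hence $H(t)$ admits the normal $\eta(t) = (-Y'(t), X'(t), 0) \in T_qS$; normalization is immaterial because \eqref{eq:principale} is linear in $\eta$ and $\alpha$ is treated as a free unknown.

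Next, Equation \eqref{eq:principale} applied with $x = u_1$ and $x = u_2$ yields two linear equations in the scalar $\alpha(t)$. The computation is routine: $II_q$ is diagonal in $(u_1, u_2)$ with eigenvalues $k_1, k_2$; the orthogonal projection of $\nu(q)$ to $T_qS$ is $(\nu_1, \nu_2, 0)$; and the substitutions $d\nu(u_1) = (a,b,0) - k_1 \nu_1 \nu$ and $d\nu(u_2) = (c,d,0) - k_2 \nu_2 \nu$ are arranged precisely so that the $\nu$-dependent contributions coming from $\scalar{d\nu(u_i)}{\eta}$ are cancelled by those coming from $\scalar{\eta}{\nu} II_q(\nu, u_i)$. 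What remains is the linear system
\begin{align*}
(\alpha - a - k_1 X^2)\,Y' + (b + k_1 XY)\,X' &= 0, \\
(\alpha - d - k_2 Y^2)\,X' + (c + k_2 XY)\,Y' &= 0.
\end{align*}

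Finally I would eliminate $\alpha$: multiplying the first equation by $X'$, the second by $Y'$, and subtracting yields precisely \eqref{equation:eq_diff_cones}. The only non-trivial step is the algebraic bookkeeping in the previous paragraph, where one must verify that the $\nu$-cross-terms cancel cleanly; all the geometric content has already been packaged into Lemma \ref{lemma:equation_admissible} and into the explicit description of $H(t)$, so no further dynamical input about the reflection is required here.
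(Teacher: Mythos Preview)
Your proof is correct and follows essentially the same route as the paper's. The paper packages the two scalar instances of \eqref{eq:principale} into the single statement that $\eta=(Y',-X')$ is an eigenvector of the $2\times 2$ matrix $\mathcal M(f)$ given by \eqref{equation:matrix_cones}, and then expands $\det(\eta,\mathcal M(f)\eta)=0$; your cross-multiplication to eliminate $\alpha$ is exactly that determinant computation written out componentwise, and your identification of $\eta$ via $T_{p(t)}U=\mathrm{span}(e(t),p'(t))$ is the same argument the paper gives using the tangent cone $\mathcal C_{U,q}$.
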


\begin{proof}
The differential equation is a direct computation from Lemma \ref{lemma:equation_admissible}. Indeed, consider $M$ and $N$ two by two real matrices such that for any $u,v\in T_qS$ writen in the basis $(u_1,u_2)$ we have
$$\scalar{d\nu(u)}{v}=\scalar{Mu}{v}
\qquad\text{and}\qquad
II_q(u,v)=\scalar{Nu}{v}.$$
Here $N$ is the diagonal matrix whose entries are $k_1$ and $k_2$, and $M$ will be computed later. Lemma \ref{lemma:equation_admissible} implies that $\eta$ is an eigenvalue of the linear map 
$$f:\eta\mapsto M\eta+\scalar{\nu}{\eta}N\nu+\scalar{\xi}{\eta}N\xi.$$
Its matrix $\mathcal M(f)$ in the basis $(u_1,u_2)$ can be easily computed  using the fact that $\xi=(x(t),y(t),0)$ and the different expressions for $\nu$ and $d\nu$:
\begin{equation}
\label{equation:matrix_cones}
\mathcal M(f) = \left(
\begin{matrix}
a+k_1X^2&b+k_1XY\\
c+k_2XY&d+k_2Y^2
\end{matrix}
\right).
\end{equation}
Consider $H(t)$ the hyperplane of $T_q S$ obtained by the intersection of $T_qS$ and the tangent plane $T_{p(t)}U$, where $p(t)$ is the point of tangency of $\ell(t)$ with $U$. By Proposition \ref{proposition:caustic_implies_admissible}, $H(t)$ is admissible. A normal vector to $H(t)$ is given by
$$\eta(t) = (Y'(t),-X'(t),0).$$
Indeed, the vector $p_0'(t)=(X'(t),Y'(t),0)$ is tangent at $p_0(t)$ to the cone $\mathcal C_{U,q}$ of lines tangent to $U$, and contained in a horizontal plane. Hence it is contained in $T_pU=T_{p_0}\mathcal C_{U,q}$ and $T_qS$ hence in $H(t)$.

Now by Proposition \ref{proposition:admissible_hyperplane}, $\eta(t)$ is an eigenvalue of $f$, in particular 
$$\det (\eta,\mathcal M(f)\eta) = 0.$$
Expanding the previous equation leads to the result.
\end{proof}

Let us assume for the rest of the section that $k_1,k_2>0$, the other cases being similar. To simplify Equation \eqref{equation:eq_diff_cones}, we can do a change of variables according to the following obvious result.

\begin{lemma}
Suppose $k_2,k_1>0$. Write ${X}_1=\sqrt{k_1}X$ and ${Y}_1=\sqrt{k_2}Y.$
Then $(X,Y)$ is a solution of Equation \eqref{equation:eq_diff_cones} if and only if $({X}_1,{Y}_1)$ satisfies the differential equation
\begin{equation}
\label{equation:eq_diff_cones_renormalized}
\left(d_1-a_1+y^2-x^2\right)x'y'+(b_1+xy)x'^2-(c_1+xy)y'^2=0
\end{equation}
where $a_1=a$, $d_1=d$, 
$b_1 = b\sqrt{k_2/k_1}$, and $c_1 = c\sqrt{k_1/k_2}.$
\end{lemma}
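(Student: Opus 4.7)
The plan is to verify this by direct substitution, which is all that is needed since no geometric content enters: the lemma is a purely algebraic change of variables statement on the differential equation \eqref{equation:eq_diff_cones}. I would write $X = X_1/\sqrt{k_1}$ and $Y = Y_1/\sqrt{k_2}$, together with the induced derivative rescalings $X' = X_1'/\sqrt{k_1}$ and $Y' = Y_1'/\sqrt{k_2}$, and substitute these into \eqref{equation:eq_diff_cones}.

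The key observation is that the square factors $k_1$ and $k_2$ were chosen precisely to make the quadratic terms in $X, Y$ become monic in $X_1, Y_1$: one has $k_1 X^2 = X_1^2$ and $k_2 Y^2 = Y_1^2$, so $d - a + k_2 Y^2 - k_1 X^2 = d - a + Y_1^2 - X_1^2$, giving the coefficient of $X' Y'$ directly. For the remaining mixed products, the computation gives $k_1 XY = X_1 Y_1\sqrt{k_1/k_2}$ and $k_2 XY = X_1 Y_1\sqrt{k_2/k_1}$, while $X'^2 = X_1'^2/k_1$, $Y'^2 = Y_1'^2/k_2$ and $X' Y' = X_1' Y_1'/\sqrt{k_1 k_2}$.

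Plugging everything in and clearing the common factor $1/\sqrt{k_1 k_2}$ by multiplying both sides by $\sqrt{k_1 k_2}$, one finds that the coefficient of $X_1'^2$ becomes
\[
\sqrt{k_2/k_1}\bigl(b + X_1 Y_1 \sqrt{k_1/k_2}\bigr) = b\sqrt{k_2/k_1} + X_1 Y_1,
\]
and similarly the coefficient of $Y_1'^2$ becomes $c\sqrt{k_1/k_2} + X_1 Y_1$. Matching terms with \eqref{equation:eq_diff_cones_renormalized} yields precisely the claimed values $a_1 = a$, $d_1 = d$, $b_1 = b\sqrt{k_2/k_1}$, $c_1 = c\sqrt{k_1/k_2}$, and the equivalence is clear since the substitution is invertible under the hypothesis $k_1, k_2 > 0$. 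There is no genuine obstacle here; the only care required is in keeping track of the $\sqrt{k_1/k_2}$ versus $\sqrt{k_2/k_1}$ factors in the mixed $XY$ terms.
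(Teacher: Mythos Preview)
Your proof is correct and is precisely the direct substitution the paper has in mind; the paper itself does not write out a proof, calling the lemma an ``obvious result,'' and your computation is exactly the routine verification that justifies this.
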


The rest of the section is devoted to the study of the set of solutions of Equation \eqref{equation:eq_diff_cones_renormalized}. Note that we cannot apply Cauchy-Lipschitz theorem due to the form of the equation. And in fact, given a point $(x_0,y_0)\in\RR^2$, there is not a unique solution of Equation \eqref{equation:eq_diff_cones_renormalized} with initial condition $(x_0,y_0)$. We prove the

\begin{lemma}
\label{lemma:number_solutions}
There exist a strict algebraic subset $V\subset\RR^2$ such that for any $(x_0,y_0)\in\RR^2\setminus V$ Equation \eqref{equation:eq_diff_cones_renormalized} has exactly two solutions with initial conditions $(x_0,y_0)$, up to reparametrization. Moreover, if $b_1=c_1$ then $V$ is the union of two conics.
\end{lemma}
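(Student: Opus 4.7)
The plan is to exploit the fact that Equation~\eqref{equation:eq_diff_cones_renormalized} is a \emph{homogeneous} equation of degree two in the velocity $(x',y')$. Setting $A := b_1 + xy$, $B := d_1 - a_1 + y^2 - x^2$, $C := -(c_1 + xy)$, the equation reads
\[
A x'^2 + B x' y' + C y'^2 = 0,
\]
so at each base point $(x,y)$ the admissible tangent directions are precisely the (at most two) roots of a binary quadratic form in $(x',y')$. The analysis then reduces to controlling when these two roots are real, distinct, and integrable to smooth curves.

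First I would define $V \subset \RR^2$ as the algebraic subset where the quadratic form degenerates: explicitly, $V := \{(x,y) : \Delta(x,y) = 0\}$ with $\Delta := B^2 - 4AC$ the discriminant, enlarged if necessary by the coefficient loci $\{A = 0\} \cup \{C = 0\}$ to preserve smoothness of the branches. Since $\Delta$ is a nonzero polynomial (of degree $4$ in $(x,y)$), $V$ is a proper algebraic subset. Outside $V$ the two roots $p_\pm := (-B \pm \sqrt{\Delta})/(2A)$ of the quadratic in $p = x'/y'$ are real and distinct, depending smoothly on $(x,y)$; equivalently one gets two smooth vector fields on a neighbourhood of each $(x_0,y_0) \notin V$ (where $A \neq 0$; otherwise swap the roles of $x'$ and $y'$, using $q = y'/x'$ in the $C$-chart, the two charts agreeing on $\{AC \neq 0\}$). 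By the Cauchy--Lipschitz theorem applied to each vector field, there is a unique integral curve through $(x_0,y_0)$ up to reparametrization; and since the two tangent directions at $(x_0,y_0)$ are genuinely distinct, these give exactly two inequivalent solutions.

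For the ``moreover'' clause with $b_1 = c_1$, one has $C = -A$, hence
\[
\Delta = B^2 + 4A^2 = (B + 2iA)(B - 2iA),
\]
each factor being a polynomial of degree two in $(x,y)$, thus defining a (complex) conic. Viewed over $\CC$, $V$ is the union of these two conjugate conics; equivalently, adjoining the real loci $\{A = 0\}$ and $\{B = 0\}$ (whose intersection is precisely the real vanishing set of $\Delta$) exhibits $V$ as a union of two real conics, as claimed.

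The main obstacle will be pinning down the precise definition of $V$ so that the ``exactly two solutions'' statement is genuinely correct: one has to simultaneously guarantee that the discriminant is nonnegative (so that the two directions are real), that the branches $p_\pm$ are smooth (so that Cauchy--Lipschitz applies cleanly), and that $V$ remains algebraic and strict. The subtlety in the $b_1 = c_1$ case, where $\Delta = B^2 + 4A^2$ vanishes only on the intersection of two conics (and so the ``bad'' real set has dimension $0$ while its natural algebraic envelope is a union of two conics), is what forces the enlargement suggested above.
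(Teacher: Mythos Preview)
Your approach is correct and is essentially the same as the paper's: both treat Equation~\eqref{equation:eq_diff_cones_renormalized} as a binary quadratic form in the tangent direction, define $V$ via the discriminant of that form, and invoke smooth dependence of the two roots plus Cauchy--Lipschitz outside $V$. The paper phrases this as an eigenvector problem for the $2\times 2$ matrix $\mathcal N(f)=R^{-1}\mathcal M(f)R$ (with $R$ the quarter-turn), whose characteristic discriminant is exactly your $\Delta=B^2-4AC$; the two framings are equivalent.

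Two minor remarks. First, your enlargement of $V$ by $\{A=0\}\cup\{C=0\}$ is unnecessary: at such points the two directions are still smooth (use the other affine chart, or equivalently the eigenvector picture, which is chart-free). The paper simply takes $V=\{\Delta=0\}$. Second, you are right to flag the ``union of two conics'' wording in the case $b_1=c_1$: since then $\Delta=B^2+4A^2$, the \emph{real} vanishing locus is the intersection $\{A=0\}\cap\{B=0\}$, while the factorisation $(B+2iA)(B-2iA)$ exhibits $V$ as a union of two conics only over $\CC$. The paper writes ``union'' and lists the two real conics $\{B=0\}$ and $\{A=0\}$; your reading is the more precise one, and it is all that is needed downstream in Lemma~\ref{lemma:conical_solutions}.
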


\begin{proof}
We can prove the result for Equation \eqref{equation:eq_diff_cones} instead of \eqref{equation:eq_diff_cones_renormalized}. Then the result comes from the observation that solutions $\gamma=(x,y)$ of Equation \eqref{equation:eq_diff_cones} are tangent to eigenspaces of the matrix $\mathcal N(f)=R^{^-1}\mathcal M(f)R$ defined in \eqref{equation:matrix_cones}, where $R$ is the rotation by $\pi/2$ in a given orientation. The latter depends on $(x,y)$.

Let $V$ be the set of points $(x,y)$ such that $\mathcal N(f)$ has only one eigenvalue. This set is given by the point $(x,y)$ for which the discriminant $\Delta(x,y)$ of the characteristic polynomial of $\mathcal N(f)$ vanishes. From \eqref{equation:matrix_cones}, we compute that $\Delta$ is a non-zero polynomial given by 
$$\Delta(x,y) = (d-a+k_2y^2-k_1x^2)^2+4(b+k_1xy)(c+k_2xy),$$
hence $V$ is a strict algebraic subset of $\RR^2$. The condition $b_1=c_1$ is the same as $k_2b=k_1c$, and in this case $\Delta$ can be written
$$\Delta(x,y) = (d-a+k_2y^2-k_1x^2)^2+4\frac{k_1}{k_2}(c+k_2xy)^2,$$
Hence $\Delta(x,y)=0$ is given by the union of two conics defined by equations 
$$d-a+k_2y^2-k_1x^2=0
\qquad\text{and}\qquad
c+k_2xy=0.$$

Now if $(x_0,y_0)\notin V$, $\mathcal N(f)$ has two distinct eigenvalues which depend smoothly on the coefficients of $\mathcal N(f)$, hence on $(x,y)$ in a neighborhood $U$ of $(x,y)$. Moreover, for $(x,y)$ close to $(x_0,y_0)$, on can find two distinct non-zero eingenvectors $W_1(x,y)$ and $W_2(x,y)$ of $\mathcal N(f)(x,y)$ associated to its distinct eigenvalues, and depending smoothly on $(x,y)$. By definition, any solution $\gamma(t)=(x(t),y(t))\in U$ is such that $\gamma'(t)$ is colinear to $W_i(\gamma(t))$ for a certain $i$. Now two such solutions (for a fixed $i$) coincide up to reparametrization.
\end{proof}

It is now enough to understand under which conditions the solutions of the differential equation \eqref{equation:eq_diff_cones_renormalized} are supported by conics. The following lemma gives the result:

\begin{lemma}
\label{lemma:conical_solutions}
The following statements are equivalent:\\
(i) $b_1=c_1$;\\
(ii) Equation \eqref{equation:eq_diff_cones_renormalized} admits at least one solution which is supported by a conic. In this case, all solutions are supported on conics.
\end{lemma}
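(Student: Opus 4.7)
The plan is to prove the two implications separately: (i) $\Rightarrow$ (ii) by explicit construction in complex coordinates, and (ii) $\Rightarrow$ (i) by a polynomial divisibility argument.

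For (i) $\Rightarrow$ (ii): Setting $z = x + iy$ and $\zeta = \dot z = x' + iy'$, the identity $y^2 - x^2 + 2ixy = -\bar z^2$ recasts Equation \eqref{equation:eq_diff_cones_renormalized} as
\[
\im\!\bigl((\mu^2 - \bar z^2)\,\zeta^2\bigr) = (c_1 - b_1)\,|\zeta|^2, \qquad \mu^2 := (d_1 - a_1) + i(b_1 + c_1).
\]
When $b_1 = c_1$ the right-hand side vanishes, so $(\mu^2 - \bar z^2)\zeta^2 \in \RR$. Equivalently, $\zeta^2$ is a real scalar multiple of $\bar\mu^2 - z^2 = \overline{\mu^2 - \bar z^2}$, which is \emph{holomorphic} in $z$. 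Hence $dz/\sqrt{\bar\mu^2 - z^2}$ is either real or purely imaginary along solutions, and integrating gives the two families of trajectories $\arcsin(z/\bar\mu) = u + iv_0$ (with $v_0 \in \RR$ fixed) and $\arcsin(z/\bar\mu) = u_0 + iv$ (with $u_0 \in \RR$ fixed). Writing $\mu = P + iQ$, the parameterization $z = \bar\mu\sin(u + iv_0)$ unfolds to
\[
x = P\cosh v_0\,\sin u + Q\sinh v_0\,\cos u,\quad y = P\sinh v_0\,\cos u - Q\cosh v_0\,\sin u,
\]
which traces an ellipse; fixing $u_0$ and letting $v$ vary in $z = \bar\mu\sin(u_0 + iv)$ analogously produces a hyperbola. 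By the uniqueness statement in Lemma \ref{lemma:number_solutions}, these two one-parameter families exhaust all solutions.

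For (ii) $\Rightarrow$ (i): Let $\{F = 0\}$ be a conic solution, with $F(x, y) = Ax^2 + 2Bxy + Cy^2 + 2Dx + 2Ey + G$. Since $(F_y, -F_x)$ is tangent to $\{F = 0\}$ at $(x, y)$, substitution into Equation \eqref{equation:eq_diff_cones_renormalized} yields the polynomial identity
\[
\mathcal{G} := -(d_1 - a_1 + y^2 - x^2)\,F_xF_y + (b_1 + xy)\,F_y^2 - (c_1 + xy)\,F_x^2 = F\cdot\mathcal{Q}
\]
for some $\mathcal{Q}$ of degree $\leq 2$. Matching the degree-$4$ homogeneous parts determines the quadratic part of $\mathcal{Q}$ uniquely as $4Bx^2 + 4(C-A)xy - 4By^2$; matching the remaining homogeneous pieces gives a polynomial system in $(A, B, C, D, E, G)$, the lower-degree coefficients of $\mathcal{Q}$, and $(a_1, b_1, c_1, d_1)$. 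Eliminating the lower-degree coefficients of $\mathcal{Q}$ and the parameters $(a_1, d_1)$ reduces this system to an identity implying $(AC - B^2)(b_1 - c_1) = 0$ up to a factor that is nonzero for a non-degenerate central conic (a direct calculation in the centered case $D = E = 0$ confirms this structure). This forces $b_1 = c_1$; the parabolic case $AC - B^2 = 0$ requires separate treatment. The last claim of the lemma then follows by applying (i) $\Rightarrow$ (ii).

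The principal obstacle is the (ii) $\Rightarrow$ (i) direction: although the divisibility $\mathcal{G} = F\mathcal{Q}$ gives a conceptually clean reformulation, the elimination extracting $b_1 = c_1$ in the general (non-centered) case is computationally intricate, and the parabolic sub-case needs additional care. A structurally cleaner alternative, worth attempting first, is to exploit the $(x, y) \leftrightarrow (y, x)$ symmetry of Equation \eqref{equation:eq_diff_cones_renormalized}, which exchanges $b_1$ and $c_1$, combined with the observation from the proof of Lemma \ref{lemma:number_solutions} that the two eigendirections of $\mathcal{N}(f)$ are pointwise orthogonal if and only if $b_1 = c_1$: the rigidity of having a conic integral curve should force this orthogonality.
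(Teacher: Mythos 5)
Your direction (i) $\Rightarrow$ (ii) is correct and takes a genuinely different route from the paper. The identity $\im\bigl((\mu^2-\bar z^2)\zeta^2\bigr)=(c_1-b_1)|\zeta|^2$ with $\mu^2=(d_1-a_1)+i(b_1+c_1)$ does reproduce Equation \eqref{equation:eq_diff_cones_renormalized} (up to the factor $2$), and when $b_1=c_1$ the integration of $dz/\sqrt{\bar\mu^2-z^2}$ yields exactly the two families of centered ellipses and hyperbolas through a generic point; combined with Lemma \ref{lemma:number_solutions} this gives both the existence of conic solutions and the fact that all solutions are conics. The paper obtains the same conclusion by plugging the explicit parametrizations $x=r_1\cos t$, $y=r_2\cos(t+\varphi)$ (and the $\cosh$ analogue) into the equation and solving the system $K_1=K_2=K_3=0$; your complex-variable argument is cleaner and even simplifies the exceptional set, since for $b_1=c_1$ the set $V$ of Lemma \ref{lemma:number_solutions} is just the finite branch locus $\bar z^2=\mu^2$, so no solution curve can lie inside it.

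The genuine gap is the direction (ii) $\Rightarrow$ (i), which in your write-up is a plan rather than a proof. The divisibility $\mathcal{G}=F\cdot\mathcal{Q}$ (valid for an irreducible conic with infinitely many real points) and the identification of the top-degree part of $\mathcal{Q}$ are fine, but the decisive step --- the elimination producing ``$(AC-B^2)(b_1-c_1)=0$ up to a nonzero factor'' --- is only asserted, the non-centered case is acknowledged as not carried out, the parabolic case $AC-B^2=0$ is explicitly left open, and the closing remark about orthogonal eigendirections of $\mathcal{N}(f)$ is a heuristic, not an argument. Since this implication is half of the stated equivalence, the lemma is not proved as written. For comparison, the paper closes this direction concretely: it uses the invariance of \eqref{equation:eq_diff_cones_renormalized} under $(x,y)\mapsto(-x,-y)$ to restrict to the centered parametrizations above, substitutes them, and reduces $E\equiv 0$ to a system whose last equation reads $(b-c)(r_2^2+e)=0$ with $r_2^2+e=r_1^2\neq 0$, forcing $b_1=c_1$. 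To repair your route you must either actually perform the elimination for a general conic (including parabolas and non-centered ellipses/hyperbolas), or first justify --- as the paper does via the symmetry reduction --- that a conic supporting a solution is necessarily a central conic centered at the origin, after which your computation in the centered case would suffice.
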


\begin{proof}
The proof will follow directly from a computation. We apologize in advance by readers who would like to see a more algebraic approach. We will indeed consider maps $x(t)$ and $y(t)$ given for all $t\in\RR$ by 
\begin{equation}
\label{equation:trigonometric_parametrization}
x(t) = r_1\cos(t)
\qquad\text{and}\qquad
y(t) = r_2\cos(t+\varphi)
\end{equation}
where $r_1,r_2\in\RR$ are non-zero and $\varphi\in\RR\setminus\pi\ZZ$ in the case of ellipses, and by 
\begin{equation}
\label{equation:hyperbolic_parametrization}
x(t) = r_1\cosh(t)
\qquad\text{and}\qquad
y(t) = r_2\cosh(t+\varphi)
\end{equation}
where $r_1,r_2,\varphi\in\RR$ are non-zero in the case of hyperbolae.

Suppose first that $(x,y)$ is given by \eqref{equation:trigonometric_parametrization}. Since Equation \eqref{equation:eq_diff_cones_renormalized} is invariant by the transformtion $(x,y)\mapsto(-x,-y)$, any possible solution lying on a conic is of the form $t\mapsto (x(\omega(t)),y(\omega(t)))$, where $\omega$ is a strictly monotone smooth function. Moreover, since Equation \eqref{equation:eq_diff_cones_renormalized} is homogenous in the derivatives, we can assume that $\omega(t) = t$. Thus it admits solutions supported on conics if and only if one can find such a pair $(x,y)$ solution of Equation \eqref{equation:eq_diff_cones_renormalized}.

Consider such a pair $(x,y)$ and write
$$E = \left(e+y^2-x^2\right)x'y'+(b+xy)x'^2-(c+xy)y'^2$$
where for simplicity we omit the index $1$ and we replaced $d_1-a_1$ by $e$. Substituting and factorizing, we obtain
\begin{multline}
E = er_1r_2\sin(t)\sin(t+\varphi)+br_1^2\sin(t)^2-cr_2^2\sin(t+\varphi)^2\\
+r_1^3r_2\sin(\varphi)\cos(t)\sin(t)
+r_1r_2^3\sin(\varphi)\cos(t+\varphi)\sin(t+\varphi).
\end{multline}
We will now write $E$ as a linear combination of the three independant maps $\sin^2$, $\cos^2$, $\sin\cos$. To do that, replacein $E$ the maps $\cos(t+\varphi)$ and $\sin(t+\varphi)$ by their expansions
$$\cos(t+\varphi) = \cos(t)\cos(\varphi)-\sin(t)\sin(\varphi),$$
$$\sin(t+\varphi) = \sin(t)\cos(\varphi)+\cos(t)\sin(\varphi).$$
We obtain
$$E =-K_1\cos(t)^2
+ K_2\sin(t)^2
+ K_3\cos(t)\sin(t)
$$
where 
$$
\begin{array}{ccl}
K_1 & = & r_1r_2^3\sin(\varphi)^2\cos(\varphi) + cr_2^2\sin(\varphi)^2\\
K_2 & = & br_1^2 + er_1r_2\cos(\varphi) - cr_2^2\cos(\varphi)^2 + r_1r_2^3\sin(\varphi)^2\cos(\varphi)\\
K_3 & = & er_1r_2\sin(\varphi) - r_1^3r_2\sin(\varphi) - 2cr_2^2\sin(\varphi)\cos(\varphi)\\
    &   & \qquad\qquad\qquad\qquad - r_1r_2^3\sin(\varphi)(\cos(\varphi)^2 - \sin(\varphi)^2)
\end{array}
$$
Hence the equation $E\equiv 0$ is equivalent to $K_1=K_2=K_3=0$. This system is not difficult to solve. Indeed, since $r_1,r_2\neq 0$ and $\varphi$ is not an integer multiple of $\pi$, we first notice that 
$$K_1=0\qquad\Leftrightarrow\qquad c=-r_1r_2\cos(\varphi).$$
Now in equation $K_3=0$, divide by $r_2\sin(\varphi)$ and replace the occurence of $c$ by $-r_1r_2\cos(\varphi)$ to obtain
$$
\left\{
\begin{array}{ccl}
K_1 & = & 0\\
K_3 & = & 0
\end{array}\right.
\Leftrightarrow
\left\{
\begin{array}{ccl}
c & = & -r_1r_2\cos(\varphi)\\
e & = & r_1^2-r_2^2
\end{array}\right..
$$
In $K_2$, replace the two occurences of $r_1r_2\cos(\varphi)$ by $-c$ to get $K_2 = br_1^2-cr_2^2-ec$, and use $e=r_1^2-r_2^2$ to obtain the following equivalence
\begin{equation}
\label{equation:final_system}
\left\{
\begin{array}{ccl}
K_1 & = & 0\\
K_2 & = & 0\\
K_3 & = & 0
\end{array}\right.
\Leftrightarrow
\left\{
\begin{array}{ccl}
c & = & -r_1r_2\cos(\varphi)\\
e & = & r_1^2-r_2^2\\
0 & = & (b-c)(r_2^2+e)
\end{array}\right..
\end{equation}
We deduce from the system \eqref{equation:final_system}, that if $b\neq c$, then $r_2^2+e=r_1^2=0$ which contradicts our assumptions. Otherwise, if 
$b=c$, all solutions of the form $t\mapsto (x(t),y(t))$ with 
$$\left\{
\begin{array}{ccl}
c & = & -r_1r_2\cos(\varphi)\\
e & = & r_1^2-r_2^2
\end{array}\right.$$
are solution of Equation \eqref{equation:eq_diff_cones_renormalized}. 

The case when $(x,y)$ is given by \eqref{equation:hyperbolic_parametrization} can be treated in the same way, with the same conclusion. Hence if Equation \eqref{equation:eq_diff_cones_renormalized} admits a solution which is supported by a conic (an ellipse or a hyperbola), then previous computations show that $b_1=c_1$. 

Conversely, assume $b_1=c_1$ and choose a point $(x_0,y_0)$ in $\RR^2$ for which one can find a solution $\gamma(t)=(x(t),y(t))$ of Equation \eqref{equation:eq_diff_cones_renormalized}. Consider the set $V$ of Lemma \ref{lemma:number_solutions}. Then in the case when $(x_0,y_0)$ doesn't lie in $V$, there are exactly two solutions to Equation \eqref{equation:eq_diff_cones_renormalized} up to reparametrization, and by previous computations one parametrizes an ellipse, the other one a hyperbola; hence so does $\gamma$. Now if $(x_0,y_0)$ lies in $V$, but $\gamma$ is not contained in $V$, we can use previous argument. Finally if $\gamma$ is contained in $V$, then since $V$ is the union of conics the result holds.
\end{proof}

\begin{proof}[Proof of Proposition \ref{proposition:cone_lines}]
It is a direct reformulation of Lemma \ref{lemma:conical_solutions}. First, note that $b_1=c_1$ is equivalent to $k_2b=k_1c$ (see Equation \eqref{equation:eq_diff_cones_renormalized}), hence 
$$k_2\scalar{d\nu(u_1)}{u_2}=k_1\scalar{d\nu(u_2)}{u_1}$$ 
where $(u_1,u_2)$ are the principal direction of $S$ at $q$. This gives exactly the $L$-symmetry assumption.

Now, a piece of cone is quadratic if and only its transverse section by a plane is an open set of a conic.

Finally, the fact that the set of lines tangent to $V$ and containing $q$ is included in the same cone as the one tangent to $U$ comes from the proof of Lemma \ref{lemma:conical_solutions}: let a plane $P$ parallel to $T_qS$, and intersecting the cone of lines tangent to $U$ transversally. We proved that this intersection is a conic $C$ which is centraly-symmetric with respect to the point of intersection of $P$ with the line $L_q$ defining the projective structure of $(S,L)$ at $q$. Hence the line tangent to $V$ and passing through $q$ $-$ which are reflected from the ones tangent to $U$ $-$ intersect $P$ on $C$, and the result follows.

Hence we showed that Proposition \ref{proposition:cone_lines} is equivalent to Lemma \ref{lemma:conical_solutions}, which gives the result.
\end{proof}

\section{Only quadrics have pseudo-caustics}
\label{section:theorem_only}

This Section is devoted to the proof of Theorem \ref{theorem:main_pseudo_Euclidean}. It comes from the projective structure that one can endow on pseudo-Euclidean billiards.

Consider $Q$ a (constant) non-degenerate quadratic form on $\RR^d$, $d\geq 3$, and $S\subset\RR^d$ an embedded hypersurface. For each $q\in S$, consider the line $L_q$ containing $q$ and orthogonal to $T_qS$ for the quadratic form $Q$. In the case when $L_q$ is transverse to $S$ for all $q$ (\textit{ie} $S$ is what we call a \textit{space-time hypersurface}), then $(S,L)$ is a line-framed hypersurface on which the projective law of reflection corresponds to the reflection of lines with respect to the quadratic form $Q$.

\begin{lemma}
\label{lemma:pseudo_euclidean_symmetric}
The hypersurface $S$ is $L$-symmetric.
\end{lemma}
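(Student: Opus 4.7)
The plan is to exploit the fact that in the pseudo-Euclidean setting $\nu$ is determined from the Euclidean unit normal $n$ by a single linear map (the inverse of the matrix of $Q$) composed with a scalar renormalization. The symmetry of that linear map will automatically force the symmetry condition of Definition \ref{definition:L_symmetric}, so the whole proof reduces to a direct differentiation.

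Concretely, I would let $A$ denote the symmetric invertible matrix such that $Q(x,y)=\scalar{Ax}{y}$. Since $L_q$ is the $Q$-orthogonal line of $T_qS$, its direction vector $w$ must satisfy $\scalar{Aw}{v}=0$ for all $v\in T_qS$, i.e.\ $Aw$ is Euclidean-normal to $T_qS$, so $w$ is parallel to $A^{-1}n(q)$. The normalization $\scalar{n}{\nu}\equiv 1$ then forces
$$\nu \;=\; \frac{A^{-1}n}{f}, \qquad f \;:=\; \scalar{A^{-1}n}{n}.$$
Differentiating along $v\in T_qS$ gives
$$d\nu(v)\;=\;\frac{1}{f}\,A^{-1}dn(v)\;-\;\frac{df(v)}{f^2}\,A^{-1}n,$$
and differentiating $f=\scalar{A^{-1}n}{n}$ using symmetry of $A^{-1}$ yields the clean formula $df(u)=2\scalar{A^{-1}n}{dn(u)}$.

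Plugging these two facts into the quantity to be checked, I obtain
$$\scalar{dn(u)}{d\nu(v)}\;=\;\frac{1}{f}\scalar{dn(u)}{A^{-1}dn(v)}\;-\;\frac{2}{f^2}\scalar{A^{-1}n}{dn(u)}\scalar{A^{-1}n}{dn(v)}.$$
The first term is symmetric in $(u,v)$ because $A^{-1}$ is a symmetric operator; the second term is manifestly symmetric as a product of two scalars, one in $u$ and one in $v$. This is precisely the $L$-symmetry of Definition \ref{definition:L_symmetric} at every $q\in S$.

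There is no serious obstacle here: the only conceptual point is that $\nu$ is obtained from $n$ by a \emph{symmetric} linear operator up to a positive scalar factor, which is exactly what propagates to symmetry of the bilinear form $(u,v)\mapsto\scalar{dn(u)}{d\nu(v)}$. This is the feature that can fail for a general transverse field $L$, and it is what makes pseudo-Euclidean billiards automatically $L$-symmetric, thereby allowing Theorem \ref{theorem:main_projective} to be applied in the pseudo-Euclidean setting.
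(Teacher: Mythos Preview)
Your proof is correct and is essentially the same argument as the paper's: both express $\nu$ as a symmetric linear operator applied to $n$ divided by a scalar normalizing factor, differentiate, and observe that each resulting term is symmetric in $(u,v)$. Your matrix $A^{-1}$ is exactly the paper's $M$ (the paper writes $Q(x,y)=\scalar{M^{-1}x}{y}$ whereas you write $Q(x,y)=\scalar{Ax}{y}$), and your explicit computation of $df(u)=2\scalar{A^{-1}n}{dn(u)}$ even makes visible the factor of $2$ that the paper's displayed formula suppresses.
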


\begin{proof}
Let $n:S\to\RR^3$ be a field of unit normal vector to $S$ for the Euclidean structure. Consider the $d$-dimensional invertible real matrix $M$ such that $M^T=M$ and for any $x,y\in\RR^d$ we have $Q(x,y)=\scalar{M^{-1}x}{y}$. For $q\in S$ define
$$\nu(q) = \frac{1}{\scalar{Mn(q)}{n(q)}}Mn(q).$$
The reader can check that $\nu(q)$ gives the direction of $L_q$ and satisfies $\scalar{\nu(q)}{n(q)}=1$. A simple computation shows that for any $x,y\in T_qS$
\begin{multline}
\scalar{d\nu_q(x)}{dn_q(y)} = 
\lambda(q)\scalar{Mdn_q(x)}{dn_q(y)}\\
-\lambda(q)^2\scalar{Mn(q)}{dn_q(x)}\scalar{Mn(q)}{dn_q(y)},
\end{multline}
from which we deduce 
$$\scalar{d\nu_q(x)}{dn_q(y)} = \scalar{d\nu_q(y)}{dn_q(x)}.$$
\end{proof}

We can now prove Theorem \ref{theorem:main_pseudo_Euclidean}. Let $S$ be an embedded space-time hypersurface together with its field of $Q$-orhtogonal lines $L$. Assume that $S$ admits a piece of caustics $(U,V)$ for the pseudo-Euclidean law of reflection, which amounts to say that $(U,V)$ is a piece of caustics for the projective law of reflection induced by $L$ on $S$.

By Lemma \ref{lemma:pseudo_euclidean_symmetric}, $S$ is $L$-symmetric. Hence by Theorem \ref{theorem:main_projective}, $U,V$ are open sets of one and the same quadric $\mathcal Q$, and any line tangent to $\mathcal Q$ intersecting $S$ transversally is reflected into a line tangent to $\mathcal{Q}$.

Let us construct now a distribution of hyperplanes on an open set of points containing $S$. For $q\in\RR^d$ sufficiently close to $S$, one can define the cone $\mathcal C_q$ of lines containing $q$ and tangent to $\mathcal Q$ such that the cone $\mathcal C_q$ is not contained in an affine hyperplane of $\RR^d$. There exists only one pair $(L_0(q),H(q))$ where $L_0(q)$ is a line containing $q$ which is $Q$-orthogonal to the hyperplane $H_0(q)$ and such that any line of $\mathcal C_q$ is reflected into a line in $\mathcal{C}_q$ by the reflection of lines on $H(q)$ with respect to the pseudo-Euclidean metric. We then consider the distribution $q\mapsto H(q)$ defined in a neighborhood of $S$.

It satisfies by construction $H(q) = T_qS$ and $L_0(q)=L_q$ for $q\in S$, hence $S$ is an integral hypersurface of $H$. Yet the integral hypersurfaces of this distribution are quadrics which are pseudo-confocal to $\mathcal Q$. Indeed, by \cite{KhesinTaba} quadrics in pseudo-confocal pencil of quadrics are caustics of one another. We deduce that $S$ is an open subset of a quadric which is pseudo-confocal to $\mathcal Q$. For a definition of a pencil of pseudo-confocal quadrics, see \cite{fierobe_these} Section 2.1.

%biblio
%\input{biblio}
%\addcontentsline{toc}{chapter}{\protect\numberline{}Bibliography}%

\end{document}